\newtheorem{thm}{Theorem}[section]
\newtheorem{prop}[thm]{Proposition}
\newtheorem{lem}[thm]{Lemma}
\newtheorem{cor}[thm]{Corollary}
\newtheorem{obs}[thm]{Observation}
\newenvironment{packed_item}{

\begin{itemize}
  \setlength{\itemsep}{1pt}
  \setlength{\parskip}{0pt}
  \setlength{\parsep}{0pt}
}{\end{itemize}}
\begin{document}

\title{
     The maximum size of adjacency-crossing graphs
   }

\author{
	Eyal Ackerman\thanks{Department of Mathematics, Physics and Computer Science,
		University of Haifa at Oranim, 	Tivon 36006, Israel.}\and
	Bal\'azs Keszegh\thanks{Alfréd Rényi Institute of Mathematics and ELTE Eötvös Loránd University, Budapest, Hungary. 
		Research supported by the J\'anos Bolyai Research Scholarship of the Hungarian Academy of Sciences, by the National Research, Development and Innovation Office -- NKFIH under the grant K 132696 and FK 132060, by the \'UNKP-21-5 and \'UNKP-22-5 New National Excellence Program of the Ministry for Innovation and Technology from the source of the National Research, Development and Innovation Fund and by the ERC Advanced Grant ``ERMiD''. This research has been implemented with the support provided by the Ministry of Innovation and Technology of Hungary from the National Research, Development and Innovation Fund, financed under the  ELTE TKP 2021-NKTA-62 funding scheme.}
}

\date{}
\maketitle

\begin{abstract}
An \emph{adjacency-crossing} graph is a graph that can be drawn such that every two edges that cross the same edge share a common endpoint.
We show that the number of edges in an $n$-vertex adjacency-crossing graph is at most $5n-10$.
If we require the edges to be drawn as straight-line segments, then this upper bound becomes $5n-11$.
Both of these bounds are tight.

The former result also follows from a very recent and independent work of Cheong et al.~\cite{cheong2023weakly} who showed that the maximum size of \emph{weakly} and \emph{strongly fan-planar} graphs coincide.
By combining this result with the bound of Kaufmann and Ueckerdt~\cite{KU22} on the size of strongly fan-planar graphs and results of Brandenburg~\cite{Br20} by which the maximum size of adjacency-crossing graphs equals the maximum size of \emph{fan-crossing} graphs which in turn equals the maximum size of weakly fan-planar graphs, one obtains the same bound on the size of adjacency-crossing graphs. However, the proof presented here is different, simpler and direct.
\end{abstract}

\section{Introduction}
\label{sec:intro}

Throughout this paper we consider graphs with no loops or parallel edges, unless stated otherwise.
A \emph{topological graph} is a graph drawn in the plane with its vertices
as points and its edges as Jordan arcs that connect the corresponding points
and do not contain any other vertex as an interior point.
Every pair of edges in a topological graph has a finite number of intersection points,
each of which is either a vertex that is common to both edges,
or a crossing point at which one edge passes from one side of the other edge to its other side.
Note that an edge may not cross itself, since edges are drawn as Jordan arcs.
If every pair of edges intersects in at most one point, then the topological graph is \emph{simple}.
We will be mainly interested in such graphs.
A topological graph is also referred to as a \emph{drawing} of its underlying abstract graph.

\emph{Beyond-planar} graphs have attracted a considerable amount of attention recently.
These are graphs that can be drawn as topological graphs which are not necessarily plane, that is, they may contain crossing edges, however, they avoid or exhibit certain crossing patterns.
For example, \emph{$k$-planar} graphs are graphs that can be drawn as topological graphs in which every edge is crossed at most $k$ times, whereas \emph{$k$-quasiplanar} graphs are graphs that can be drawn without $k$ pairwise crossing edges.
By a slight abuse of terminology, we also refer to the drawings themselves as possessing a given beyond-planarity property, for example, in a $k$-planar topological graph every edge is crossed at most $k$ times.
 
The main property of beyond-planar graphs that has been studied is their \emph{density}, that is, the maximum number of edges as a function of the number of vertices which we denote by $n$.
Other properties that have been investigated include recognizing such graphs (e.g., whether it is possible to decide in polynomial-time if a given graph is $1$-planar) and their hierarchy (e.g., whether every $k$-planar graph is $(k+1)$-quasiplanar).\footnote{The answers to these questions is no~\cite{KM13} and yes~\cite{k-planar-k+1-quasi}).}
See the recent book~\cite{HT20} for a survey on beyond-planar graphs.

\smallskip
A new class of beyond-planar graphs named \emph{fan-planar} graphs was introduced by Kaufmann and Ueckerdt~\cite{KU14} in 2014.
They defined these graphs as graphs that can be drawn as topological graphs which avoid both of the following crossing patterns: (1)~an edge which is crossed by two independent edges, that is, edges not sharing a common endpoint; and (2)~an edge $e$ which is crossed by two edges $e_1$ and $e_2$ that share a common endpoint which appears on different sides of $e$. That is, if $e_1$ and $e_2$ are oriented towards their common endpoint and $e$ is oriented arbitrarily, then one of $e_1$ and $e_2$ crosses $e$ locally from its left side to its right side while the other crosses $e$ from its right side to its left side.
These forbidden crossing patterns are referred to in the literature as \emph{Configuration I} and \emph{Configuration II}, respectively (see Figures~\ref{fig:conf-I} and~\ref{fig:conf-II}).
\begin{figure}[t]
	\centering
	\subfloat[]{\includegraphics[width= 3cm]{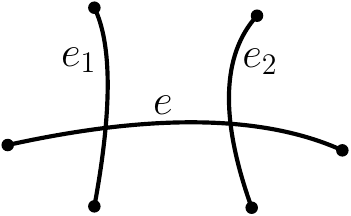}\label{fig:conf-I}}
	\hspace{5mm}
	\subfloat[]{\includegraphics[width= 3cm]{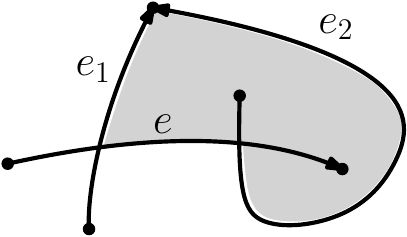}\label{fig:conf-II}}
	\hspace{5mm}
	\subfloat[]{\includegraphics[width= 3.5cm]{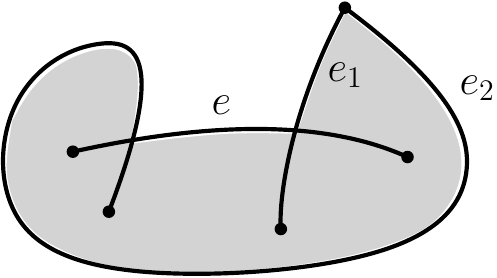}\label{fig:conf-III}}
	\hspace{5mm}
	\subfloat[]{\includegraphics[width= 3cm]{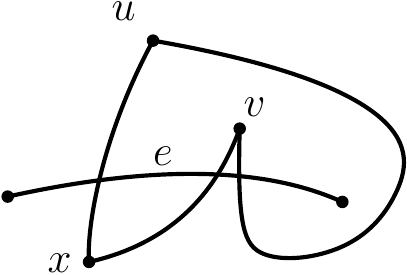}\label{fig:triangle-crossing}}
	\caption{Crossing patterns. (a) Configuration I: $e$ is crossed by $e_1$ and $e_2$ which do not share a common vertex (they may or may not cross). (b) Configuration II: $e$ is crossed by $e_1$ and $e_2$ which share a common vertex. However, if they are oriented towards this vertex and $e$ is oriented arbitrarily, then one of $e_1$ and $e_2$ crosses $e$ from left to right and the other from right to left. Alternatively, one endpoint of $e$ is in the bounded region determined by $e$, $e_1$ and $e_2$. (c) Configuration III: both endpoints of $e$ are in the bounded region determined by $e$, $e_1$ and $e_2$. (d) a triangle-crossing of $e$.} 
	\label{fig:configurations}
\end{figure}

It was claimed in~\cite{KU14} that an $n$-vertex graph that can be drawn as a simple topological graph which avoids Configurations~I and~II has at most $5n-10$ edges (for $n \ge 3$) and that this bound is tight.
Following the introduction of fan-planar graphs in~\cite{KU14}, several other of their combinatorial and computational properties were studied by researchers within the Graph Drawing community (see~\cite{BG20} for a survey).
One such example is a result of Brandenburg~\cite{Br20} by which for every \emph{fan-crossing} topological graph there is a fan-planar topological graph with the same set of vertices and the same number of edges.
A graph is \emph{fan-crossing} if it can be drawn such that for every edge there is a vertex that is incident to all the edges that cross this edge.
A slightly less restrictive crossing pattern requires that every two edges that cross the same edge share a common vertex, that is, only Configuration~I is forbidden.
Graphs admitting such drawings are called \emph{adjacency-crossing} graphs~\cite{Br20}.
Note that an adjacency-crossing topological graph is not necessarily fan-crossing, since it might contain a \emph{triangle-crossing}, that is, an edge $e$ which is crossed by three edges $(u,v)$, $(v,x)$ and $(u,x)$, see Figure~\ref{fig:triangle-crossing} for an example (note that it is not the only possible configuration of a triangle-crossing).
However, it was shown in~\cite{Br20} that every adjacency-crossing graph is fan-crossing, whereas 
there are fan-crossing graphs which are not fan-planar.

It is important to note that all these results refer to simple topological graphs and that when considering drawings using straight-line edges, then all of these classes coincide since Configuration II is impossible.
Klemz et al.~\cite{KKRS23} considered non-simple fan-planar topological graphs and showed that they can be redrawn as simple fan-planar topological graphs without introducing new crossings.
They have also observed a gap in the proof of the density of fan-planar graphs in the preprint~\cite{KU14} that introduced them.
This led to a new definition of fan-planarity in the recent journal version~\cite{KU22} of~\cite{KU14}. Namely, an additional forbidden configuration has been introduced, \emph{Configuration~III}, see Figure~\ref{fig:conf-III}.
Configurations~II and~III can be described as configurations in which an edge $e$ is crossed by two adjacent edges $e_1$ and $e_2$ such that at least one endpoint of $e$ is contained in a bounded region whose boundary consists of segments of $e$, $e_1$ and $e_2$.

In order to distinguish between the two definitions of fan-planar graphs, Cheong et al.~\cite{CPS22} referred to (topological) graphs avoiding Configurations~I, II and~III as \emph{strongly} fan-planar, which implies that graphs that avoid only Configurations~I and~II are \emph{weakly} fan-planar.
Therefore, by~\cite{KU22} the density of strongly fan-planar graphs is $5n-10$, and one can no longer use this bound and the reductions in~\cite{Br20} to get a similar bound on the density of weakly fan-planar graphs, fan-crossing graphs and adjacency-crossing graphs.
Note also that the definition of strongly fan-planar graphs does not generalize to the sphere, since Configurations~I and~III are equivalent on the sphere.
However, drawing weakly fan-planar, fan-crossing and adjacency-crossing graphs on the sphere is equivalent to drawing them in the plane.

\medskip
The main result of this work is a new proof for the claimed density of simple adjacency-crossing topological graphs, that is, graphs avoiding Configuration~I. This immediately implies the same bound for strongly/weakly fan-planar and fan-crossing graphs.
Furthermore, we also verify the conjecture from~\cite{KU22} (see also~\cite{BG20}) that a graph that admits a fan-planar drawing using straight-line edges has at most $5n-11$ edges.
All of these bounds are tight for infinitely many values of $n$ due to matching lower bound constructions~\cite{KU22}.

\begin{thm}\label{thm:no-conf-I}
An $n$-vertex graph that can be drawn as a simple topological graph in which every two edges that cross a third edge share a common vertex has at most $5n-10$ edges, for every $n \ge 3$.
%This bound is tight for every $n \ge 20$.
If the graph can be drawn in such a way using straight-line edges, then it has at most $5n-11$ edges.
\end{thm}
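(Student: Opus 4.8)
The plan is to bound the number of edges of a simple topological graph $G$ avoiding Configuration~I by a discharging argument together with a reduction that removes triangle-crossings. First I would reduce to the case where $G$ is \emph{fan-crossing}: by the result of Brandenburg~\cite{Br20} quoted above, every adjacency-crossing graph is fan-crossing, but rather than invoking that black box I would prefer to handle triangle-crossings directly. Concretely, if an edge $e=(a,b)$ is triangle-crossed by the three edges of a triangle $uvx$, I would argue that one can locally redraw so as to decrease the number of crossings (or the number of triangle-crossings) while keeping the graph simple and Configuration-I-free and without changing the vertex set or edge set; iterating, we may assume $G$ has no triangle-crossing, hence is fan-crossing, so for every edge $e$ there is a single vertex $v_e$ incident to all edges crossing $e$. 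This normalization is what makes a clean charging scheme possible.

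Next I would set up the counting. Assume for contradiction that $|E(G)| \ge 5n-9$ (resp. $5n-10$ in the straight-line case). Consider the planarization $G^\times$ obtained by turning crossing points into dummy vertices; I would then want to assign to each crossing a ``responsible'' vertex, namely the apex $v_e$ of the crossed edge, and use the fan-crossing structure to show that the crossings responsible to a fixed vertex $v$ are highly constrained — essentially they are organized around $v$ in a fan, so the edges incident to $v$ that do the crossing, together with the crossed edges, form a planar-like substructure whose size is linear with a good constant. The heart of the argument is a discharging phase: start each vertex with charge $\deg(v)-?$ and each face of $G^\times$ with charge proportional to its length minus a constant (à la Euler's formula $n - e + f = 2$ applied to $G^\times$), then move charge across the drawing so that triangular faces and crossing-vertices are paid for by the apices. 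The target is to show the total charge forces $|E| \le 5n-10$; squeezing the extra $-1$ in the straight-line case should come from the fact that in a straight-line drawing Configuration~II is automatically absent, which rules out one more local configuration (a ``tight'' triangle that would otherwise be needed to meet the bound), so an extremal straight-line example cannot contain the gadget that an extremal $5n-10$ example must contain.

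The key steps, in order, would be: (1) normalize the drawing to eliminate triangle-crossings, obtaining a fan-crossing drawing on the same vertex and edge sets; (2) show that we may further assume the drawing is ``crossing-minimal'' among such, which kills various degenerate configurations (empty lenses, an edge and its apex bounding an empty region, two crossing edges that are ``parallel'' in the rotation at $v_e$, etc.); (3) build the planarization $G^\times$ and classify its faces, in particular bounding the number of faces of length $3$ and showing every crossing vertex is incident to few short faces; (4) run the discharging/Euler computation to get $|E(G)| \le 5n-10$; (5) in the straight-line case, identify the unique type of substructure achieving equality and show straight-line drawability forbids it, improving the bound to $5n-11$; (6) check tightness is already supplied by~\cite{KU22}. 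I expect step (1)–(2), the redrawing normalizations, to be the main obstacle: one must verify that each local surgery genuinely makes progress under a well-chosen potential (e.g. lexicographic on number of crossings, then number of triangle-crossings, then total number of crossing-pairs) and, crucially, does not reintroduce Configuration~I or create a multi-edge — the subtle point being that an edge $e_1$ freed from one crossing might be rerouted into crossing some edge $e_3$ together with an independent edge. Handling this interaction cleanly, rather than by the indirect route through~\cite{Br20,KU22}, is exactly where the ``different, simpler and direct'' proof promised in the abstract has to do its real work.
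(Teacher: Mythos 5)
Your proposal is a plan rather than a proof, and the two places where you defer the real work are precisely where it would break down. First, your step (1)--(2) normalization (eliminating triangle-crossings by local surgeries under a lexicographic potential, then passing to a crossing-minimal fan-crossing drawing) is not carried out, and you yourself flag it as the main obstacle; the paper never needs it. The paper works directly with the given adjacency-crossing drawing: triangle-crossings are not removed but simply absorbed into the face analysis of the planarization $M(G)$ (e.g.\ in Proposition~\ref{prop:two-1-triangles} a triangle-crossing of $(C,D)$ is used as a structural fact, not redrawn away). Moreover, even if you granted yourself the reduction to fan-crossing drawings (which is Brandenburg's nontrivial theorem~\cite{Br20}), it would not shortcut anything: the density bound for fan-crossing/weakly fan-planar graphs is exactly the open point created by the gap in~\cite{KU14}, so the whole burden still falls on your step (4), which in your write-up has no content --- no initial charge values, no discharging rules, and no verification that deficient faces get paid. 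In the paper this is the entire proof: faces of $M(G)$ are charged $|f|+|V(f)|-4$ (total $4n-8$, after an induction that enforces $\delta(G)\ge 6$ and $2$-connectivity of $M(G)$), and five discharging steps built on the notions of wedge-neighbors and distant-neighbors are needed, with a delicate case analysis (Propositions~\ref{prop:A4=B2}--\ref{prop:f'-1triangle}) to rescue $1$-triangles, $1$-quadrilaterals, $0$-pentagons and $0$-hexagons, before one can conclude $0.8|E(G)|\le 4n-8$. Asserting that crossings ``responsible'' to an apex form a planar-like substructure of linear size with a good constant is a hope, not an argument, and it is exactly the kind of claim that failed in the original fan-planar density proof.

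Second, your route to $5n-11$ in the straight-line case presupposes a characterization of the extremal $5n-10$ configurations (``the unique type of substructure achieving equality'') which you do not prove and which the discharging method does not hand you; also, the remark that Configuration~II is absent for straight-line edges is true but does not by itself forbid any gadget. The paper's argument is far more modest and local: in a straight-line drawing the outer face is bounded by a convex polygon all of whose corners are original vertices, so it pays charge only in Step~1 and retains excess charge $1.6|f_{\rm out}|-4\ge 0.8$, which tightens the inequality to $0.8+0.8|E(G)|\le 4n-8$, i.e.\ $|E(G)|\le 5n-11$. So both the core bound and the straight-line refinement in your proposal rest on unproven steps; to make this a proof you would need to either supply the redrawing lemma and a complete discharging scheme, or abandon the normalization and argue directly on the planarization as the paper does.
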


\begin{cor}\label{cor:all}
If $G$ is an $n$-vertex graph that can be drawn as a simple topological graph which is fan-crossing, weakly fan-planar or strongly fan-planar, then $G$ has at most $5n-10$ edges, for every $n \ge 3$.
If $G$ can be drawn in such a way using straight-line edges, then it has at most $5n-11$ edges.
\end{cor}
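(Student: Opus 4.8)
The plan is to obtain Corollary~\ref{cor:all} as an immediate consequence of Theorem~\ref{thm:no-conf-I} via a containment argument. The hypothesis of Theorem~\ref{thm:no-conf-I} is precisely that $G$ admits a simple topological drawing in which every two edges that cross a common third edge share a vertex, i.e.\ a drawing avoiding Configuration~I. So the entire task reduces to checking that each of the three drawing styles named in the statement—fan-crossing, weakly fan-planar, and strongly fan-planar—already forbids Configuration~I, after which the bounds $5n-10$ and $5n-11$ are inherited verbatim from Theorem~\ref{thm:no-conf-I}.

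First I would dispose of the two fan-planar variants, which are the easy cases. A weakly fan-planar drawing avoids Configurations~I and~II by definition, and a strongly fan-planar drawing avoids Configurations~I, II and~III by definition; in both cases Configuration~I is among the forbidden patterns, so the drawing trivially satisfies the hypothesis of Theorem~\ref{thm:no-conf-I}. Hence if $G$ is drawable as a weakly or strongly fan-planar simple topological graph, that very drawing witnesses $G$ as a Configuration-I-free drawing, and Theorem~\ref{thm:no-conf-I} yields $|E(G)| \le 5n-10$.

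Next I would treat the fan-crossing case, which requires one extra sentence of reasoning. By definition, in a fan-crossing drawing, for every edge $e$ there is a single vertex $v$ incident to \emph{all} edges that cross $e$. In particular, any two edges crossing $e$ are both incident to $v$ and therefore share the common endpoint $v$, which is exactly the statement that Configuration~I does not occur at $e$. Since $e$ was arbitrary, a fan-crossing drawing is a Configuration-I-free drawing, so Theorem~\ref{thm:no-conf-I} again gives $|E(G)| \le 5n-10$.

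Finally, the straight-line assertions follow by rerunning the same three implications on a straight-line drawing: if $G$ admits a straight-line drawing that is fan-crossing, weakly fan-planar, or strongly fan-planar, then by the above that same straight-line drawing avoids Configuration~I, and the straight-line part of Theorem~\ref{thm:no-conf-I} sharpens the bound to $5n-11$. I do not expect any genuine obstacle here; the one point deserving explicit mention is that every implication is used in the \emph{easy} direction, passing from a more restrictive class to the less restrictive adjacency-crossing class on the \emph{same} drawing, so none of the redrawing reductions of Brandenburg~\cite{Br20} (which are needed only to show these classes have the same density, i.e.\ for the reverse containment) are invoked.
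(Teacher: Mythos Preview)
Your proposal is correct and matches the paper's approach: the paper treats Corollary~\ref{cor:all} as an immediate consequence of Theorem~\ref{thm:no-conf-I}, since by definition each of the three classes forbids Configuration~I and hence is contained in the adjacency-crossing class. The only difference is that you spell out the one-line containment arguments explicitly, which the paper leaves implicit.
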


While compiling the latest version of this paper, we learned that Cheong et al.~\cite{cheong2023weakly} have recently showed that although there are weakly fan-planar graphs which are not strongly fan-planar, the densities of these graph classes are equal.
Therefore, their work combined with the results in~\cite{KU22} and~\cite{Br20} also implies the first parts of Theorem~\ref{thm:no-conf-I} and Corollary~\ref{cor:all}.
However, the proof presented here is still of interest since it uses different methods, is simpler and self-contained.

\smallskip
For convenience, the definitions of the classes of the topological graphs mentioned above are summarized in the following table:

\smallskip
\begin{tabular}{|l|l|}
	\hline
	Graph class &  Forbidden configuration(s) \\
	\hline
	strongly fan-planar &  Configurations I, II and III\\
	\hline
	weakly fan-planar &  Configurations I and II\\
	\hline
	fan-crossing &  Configuration I and triangle-crossings \\ 
	\hline
	adjacency-crossing &  Configuration I \\
	\hline 
\end{tabular}

\smallskip
It follows from these definitions and the results in~\cite{Br20} and~\cite{cheong2023weakly} that the hierarchy of these classes is: $\textrm{strongly fan-planar} \subsetneq \textrm{weakly fan-planar} \subsetneq \textrm{fan-crossing} = \textrm{adjacency-crossing}$.

%%%%%%%%%%%%%%%%%%%%%%%%%%%%%%%%%%%%%%%%%%%%%%%%%%%%%%%%%%%%%%%%%%%%%%%%%%%%%%%%%%%%%%%%%
\section{Proof of Theorem~\ref{thm:no-conf-I}}
\label{sec:proof}
%%%%%%%%%%%%%%%%%%%%%%%%%%%%%%%%%%%%%%%%%%%%%%%%%%%%%%%%%%%%%%%%%%%%%%%%%%%%%%%%%%%%%%%%%

For a topological graph $G$ we denote by $M(G)$ the plane map induced by $G$.
That is, the vertices of $M(G)$ are the vertices and crossing points in $G$
and the edges of $M(G)$ are the crossing-free segments of the edges of $G$,
where each such edge-segment connects two vertices of $M(G)$.
We will use capital letters to denote the vertices of $G$, and small letters to denote
crossing points in $G$ (that are vertices in $M(G)$).
An edge of $M(G)$ will usually be denoted by its endpoints, e.g., $xy$,
whereas for an edge of $G$ we will use the standard notation, e.g., $(A,B)$.

Let $G$ be a simple topological graph with $n\geq 3$ vertices such that any two edges that cross the same edge share a common endpoint.
We prove that such graphs have at most $5n-10$ edges by induction on $n$.
For $3 \le n \leq 8$ we have $5n-10 > {{n}\choose{2}}$ and thus the claim trivially holds.
Therefore, we may assume that $n \geq 9$.
Furthermore, we may assume that $G$ is connected and that the degree of every vertex in $G$ is at least $6$,
for otherwise the claim easily follows by induction.
The claim is also easy to prove if the vertex-connectivity of $M(G)$ is less than two.

\begin{prop}\label{prop:2-connected}
If $M(G)$ is not $2$-connected, then $G$ has at most $5n-10$ edges.
\end{prop}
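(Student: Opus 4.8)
The plan is to argue by induction on $n$, using a cut vertex of $M(G)$ to split the drawing into smaller pieces to which the inductive hypothesis of Theorem~\ref{thm:no-conf-I} applies. Suppose $M(G)$ is not $2$-connected. Since $G$ is connected, $M(G)$ is connected, so it has a cut vertex $z$. First I would rule out the easy possibility that $z$ is a crossing point of $G$: a crossing point has degree $4$ in $M(G)$ and lies on exactly two edges of $G$; removing it cannot disconnect the drawing of a connected graph $G$ because each of the two edges through $z$ remains a connected arc whose endpoints are genuine vertices of $G$. Hence $z$ must be an actual vertex $Z$ of $G$, and $G - Z$ is disconnected (or $Z$ together with the rotation around it separates the drawing).

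Next I would set up the splitting. Let the connected components of $M(G) - Z$ group the neighbors of $Z$ (and the crossings on edges at $Z$) into blocks; this partitions the edges of $G$ incident to $Z$, and more importantly partitions $V(G)\setminus\{Z\}$ into nonempty parts $V_1,\dots,V_k$ with $k\ge 2$. For each $i$, let $G_i$ be the topological graph induced by the drawing on $V_i \cup \{Z\}$, i.e., take all vertices in $V_i$, the vertex $Z$, and all edges of $G$ drawn entirely within the closed region corresponding to that component (edges inside $V_i$ together with edges from $V_i$ to $Z$). One must check that every edge of $G$ is assigned to exactly one $G_i$: an edge between $V_i$ and $V_j$ with $i\ne j$ would have to pass through $Z$ (impossible, as edges have no interior vertices) or cross the "separating" structure in a way that contradicts $z$ being a cut vertex of the map — so no such edge exists. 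Each $G_i$ has $n_i := |V_i| + 1$ vertices, $\sum_i n_i = n + k$, and $\sum_i e(G_i) = e(G)$. Crucially, each $G_i$ is again a simple topological graph in which any two edges crossing a common edge share an endpoint (this property is inherited by taking a sub-drawing), so if $n_i \ge 3$ the inductive hypothesis gives $e(G_i) \le 5n_i - 10$.

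Then I would do the arithmetic. Summing, $e(G) = \sum_i e(G_i) \le \sum_i (5n_i - 10) = 5(n+k) - 10k = 5n - 5k \le 5n - 10$ since $k \ge 2$ — provided every $G_i$ has at least $3$ vertices. The main obstacle, and the one I expect to require the most care, is handling the small parts $n_i \in \{1,2\}$: a component $V_i$ with $|V_i| = 1$ contributes a single vertex $A$ joined to $Z$, i.e. $n_i = 2$ and $e(G_i) \le 1$, and $|V_i| = 2$ gives $n_i = 3$ with $e(G_i)\le 3$. For $n_i = 2$ we only have $e(G_i) \le 1 = 5n_i - 9$, which is one more than the bound $5n_i-10$ would give, so a naive sum could be off. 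I would absorb this by noting that such a tiny component corresponds to a vertex $A$ of degree $\le 1$ in $G$ (only the edge to $Z$, since any other edge at $A$ would go to $V_i$ and there are no other vertices there, or would leave $V_i$, impossible) — but we already assumed every vertex of $G$ has degree at least $6$, a contradiction. So in fact every part satisfies $|V_i|\ge 2$, hence $n_i\ge 3$; one should double-check the boundary case $|V_i|=2$ against the degree-$\ge 6$ assumption too, which likewise forces $|V_i|$ to be reasonably large, making the slack in the sum only more comfortable. This reduces everything to the clean computation above and completes the proof.
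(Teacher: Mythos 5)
Your treatment of the case where the cut vertex of $M(G)$ is a vertex $Z$ of $G$ is essentially the paper's argument (split at $Z$, use $\delta(G)\ge 6$ to rule out tiny parts, apply induction and sum), but there is a genuine gap: you dismiss the possibility that the cut vertex is a crossing point, and the reason you give is wrong. If $x$ is the crossing point of two edges $e_1=(A,B)$ and $e_2=(C,D)$, then deleting $x$ from $M(G)$ deletes an interior point of each of these two arcs, so each of $e_1$ and $e_2$ falls apart into two pieces; it is not true that ``each of the two edges through $z$ remains a connected arc.'' Consequently a crossing point can perfectly well be a cut vertex of $M(G)$ even though $G$ is connected and has minimum degree $6$: take two disjoint copies of $K_7$ drawn far apart and join them by two edges that cross each other once. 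There $G$ is connected, every vertex has degree at least $6$, and the crossing point of the two connecting edges separates $M(G)$. So this case is not vacuous and your proof, as written, does not cover it.

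The paper handles it by noting that if the cut vertex $x$ is the crossing point of $e_1$ and $e_2$, then $G\setminus\{e_1,e_2\}$ (edge deletion) must be disconnected; since $e_1$ and $e_2$ cross, they share no endpoint, so by $\delta(G)\ge 6$ each side of the split has at least $6$ vertices, and applying the induction hypothesis to the two induced sub-drawings and adding back the two deleted edges gives $|E(G)|\le 5|V(G')|-10+5|V(G'')|-10+2=5n-18<5n-10$. Adding this case (together with the short argument that a separating crossing point forces $G\setminus\{e_1,e_2\}$ to be disconnected, since every other edge of $G$ and every partial arc survives in $M(G)\setminus\{x\}$) would complete your proof; the remainder of your write-up, including the absorption of small components via the minimum-degree assumption, is sound and matches the paper.
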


\begin{proof}
Assume that $M(G)$ has a vertex $x$ such that $M(G) \setminus \{x\}$ is not connected.
The vertex $x$ is either a vertex of $G$ or a crossing point of two of its edges.
Suppose that $x$ is a vertex of $G$. Then, $G \setminus \{x\}$ is also not connected.
Let $G_1,\ldots,G_k$ be the connected components of $G \setminus \{x\}$,
let $G'$ be the topological graph induced by $V(G_1) \cup \{x\}$
and let $G''$ be the topological graph induced by $V(G_2) \cup \ldots \cup V(G_k) \cup \{x\}$.
Note that $7 \leq |V(G')|,|V(G'')| < n$, since $\delta(G) \geq 6$.
Therefore, it follows from the induction hypothesis that $|E(G)| \leq 5|V(G')|-10 + 5|V(G'')|-10 = 5(n+1)-20 < 5n-10$.

Suppose now that $x$ is a crossing point of two edges $e_1$ and $e_2$.
Let $G_1,\ldots,G_k$ be the connected components of $G \setminus \{e_1,e_2\}$,
let $G'$ be the topological graph induced by $V(G_1)$
and let $G''$ be the topological graph induced by $V(G_2) \cup \ldots \cup V(G_k)$.
Note that $6 \leq |V(G')|,|V(G'')| < n$, since $\delta(G) \geq 6$ ($e_1$ and $e_2$ may not share a vertex since they cross).
Therefore, it follows from the induction hypothesis that $|E(G)| \leq 5|V(G')|-10 + 5|V(G'')|-10 + 2 = 5n-18 < 5n-10$.
\end{proof}

In light of Proposition~\ref{prop:2-connected}, we may assume henceforth that $M(G)$ is $2$-connected.
Therefore the boundary of every face $f$ is a closed Jordan curve which corresponds to a simple cycle in $M(G)$.
We refer to the vertices and the edges of $M(G)$ on this cycle as being \emph{incident} to $f$ or simply being the vertices and edges of $f$.
The \emph{size} of a face $f$, $|f|$, is the number of edges of $M(G)$ on its boundary.

\medskip
We will use the \emph{Discharging Method} to prove Theorem~\ref{thm:no-conf-I}.
This technique, that was introduced and used successfully for proving structural properties of planar graphs
(most notably, in the proof of the Four Color Theorem~\cite{AH77}),
has proven to be a useful tool also for showing upper bounds on the size of some beyond-planar graphs~\cite{Ac09,Ac14,Ac19,AT07,AFKMT12,min-k-planar,KP11}.
In our case, we begin by assigning a \emph{charge} to every face of the planar map $M(G)$ such that the total charge is $4n-8$.
Then, we redistribute the charge in several steps such that eventually the charge of every face is non-negative
and the charge of every vertex $A \in V(G)$ is $0.4\deg(A)$.
Hence, $0.8|E(G)| = \sum_{A \in V(G)} 0.4\deg(A) \leq 4n-8$ and we get the desired bound on $|E(G)|$.
We now describe the proof in details. 

\paragraph{Charging.}
Let $V'$, $E'$, and $F'$ denote the vertex, edge, and face sets of $M(G)$, respectively.
For a face $f \in F'$ we denote by $V(f)$ the set of vertices of $G$ that are incident to $f$.
It is easy to see that $\sum_{f \in F'} |V(f)| = \sum_{A \in V(G)} \deg(A)$ and that
$\sum_{f \in F'} |f| = 2|E'| = \sum_{u \in V'} \deg(u)$.
Note also that every vertex in $V' \setminus V(G)$ is a crossing point in $G$
and therefore its degree in $M(G)$ is four. Hence,
$$
    \sum_{f \in F'} |V(f)| = \sum_{A \in V(G)} \deg(A) =
        \sum_{u \in V'} \deg(u) - \sum_{u \in V' \setminus V(G)}\deg(u) =
    2|E'| - 4\left(|V'|-n\right).
$$
Assigning every face $f \in F'$ a charge of $|f|+|V(f)|-4$,
we get that the total charge over all faces is
$$
    \sum_{f \in F'}\left(|f|+|V(f)|-4\right) = 2|E'|+ 2|E'| - 4\left(|V'|-n\right) - 4|F'| = 4n-8,
$$
where the last equality follows from Euler's Polyhedral Formula by which $|V'|+|F'|-|E'| = 2$
(recall that $G$ is connected and therefore so is $M(G)$).

\paragraph{Discharging.}
We will redistribute the charges in several steps.
We denote by $ch_i(x)$ the charge of an element $x$ (either a face in $F'$ or a vertex in $V(G)$) after the $i$th step,
where $ch_0(\cdot)$ represents the initial charge function.
We will use the terms \emph{triangles}, \emph{quadrilaterals}, \emph{pentagons} and \emph{hexagon}
to refer to faces of size $3$, $4$, $5$ and $6$, respectively.
An integer before the name of a face denotes the number of original vertices on its boundary.
For example, a $2$-triangle is a face of size $3$ with exactly two original vertices on its boundary.
% It follows from our choice of $G$ (using Lemma~\ref{lem:good}) that if $V(f)>1$ for a face $f$,
% then $f$ is a triangle.
Since $G$ is a simple topological graph, there are no faces of size $2$ in $F'$.
Because $G$ does not contain three pairwise crossing edges there are no $0$-triangles and therefore the initial charge of every face is non-negative.

\medskip\noindent\textbf{Step 1: Charging the vertices of $G$.}
In the first discharging step, every face sends $0.4$ units of charge to every original vertex of $G$ on its boundary, see Figure~\ref{fig:step1}. $\hfill\leftrightsquigarrow$\medskip

\begin{obs}
For every vertex $A \in V(G)$ we have $ch_1(A) = 0.4\deg(A)$.
For every face $f \in F'$ we have $ch_1(f) \ge 0$ unless $f$ is a $1$-triangle in which case $ch_1(f) = -0.4$.
\end{obs}

In order to describe the way the charge of $1$-triangles becomes non-negative,
we will need the following definitions.
Let $f$ be a face, let $e$ be one of its edges, and let $f'$ be the other face that shares $e$ with $f$.
We say that $f'$ is the \emph{edge-neighbor} of $f$ at $e$.
If the boundaries of two faces intersect at a vertex $v \in V' \setminus V(G)$ and they are not edge-neighbors at any of the edges that are incident to $v$, then we say that these two faces are \emph{vertex-neighbors} at $v$.
Note that a face cannot be a vertex- or an edge-neighbor of itself since $M(G)$ is $2$-connected.

\smallskip\noindent\textbf{Wedge-neighbors.}
Let $f_0$ be a $1$-triangle in $M(G)$ and let $A \in V(G)$ and $x_1,y_1 \in V' \setminus V(G)$ be its vertices.
Denote by $e_x$ (resp., $e_y$) the edge of $G$ that contains $x_1$ (resp., $y_1$) and $A$.
Let $f_1$ be the edge-neighbor of $f_0$ at $x_1y_1$.
For $i \geq 1$, if $f_{i}$ is a $0$-quadrilateral,
then denote by $x_{i+1}y_{i+1}$ the edge of $M(G)$ opposite to $x_iy_i$ in $f_{i}$,
such that $e_x$ contains $x_{i+1}$ and $e_y$ contains $y_{i+1}$,
and let $f_{i+1}$ be the edge-neighbor of $f_i$ at $x_{i+1}y_{i+1}$.
Observe that $f_i \neq f_j$ for $i < j$, for otherwise $x_j$ coincides with one of $x_i$ and $x_{i+1}$
(which implies that $e_x$ crosses itself) or with one of $y_i$ and $y_{i+1}$ (which implies that $e_x$ and $e_y$ intersect more than once).
Furthermore, there is no edge of $G$ that contains two different edges $x_iy_i$ and $x_jy_j$, for such an edge would cross each of $e_x$ and $e_y$ more than once.
Let $j$ be the maximum index for which $f_j$ is defined. 
Since there are at most $|E(G)|-2$ edges that cross $e_x$ and $e_y$ such an index $j$ must exist.
We say that $f_j$ is the \emph{wedge-neighbor} of $f_0$ at $x_1y_1$ and that  $f_0$ is the wedge-neighbor of $f_j$ at $x_jy_j$ (see Figure~\ref{fig:step2}).
The \emph{wedge} of $f_0$ consists of the faces $f_0,\ldots,f_j$ and its \emph{apex} is $A$.
\begin{figure}[t]
	\centering
	\subfloat[Step 1: every face sends $0.4$ units of charge to each vertex of $G$ on its boundary]{\includegraphics[width= 4cm]{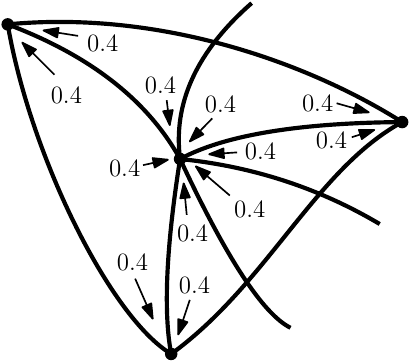}\label{fig:step1}}
	\hspace{5mm}
	\subfloat[Step 2: the $1$-triangle $f_0$ receives $0.4$ units of charge from its wedge-neighbor $f_j$.]{\includegraphics[width= 8cm]{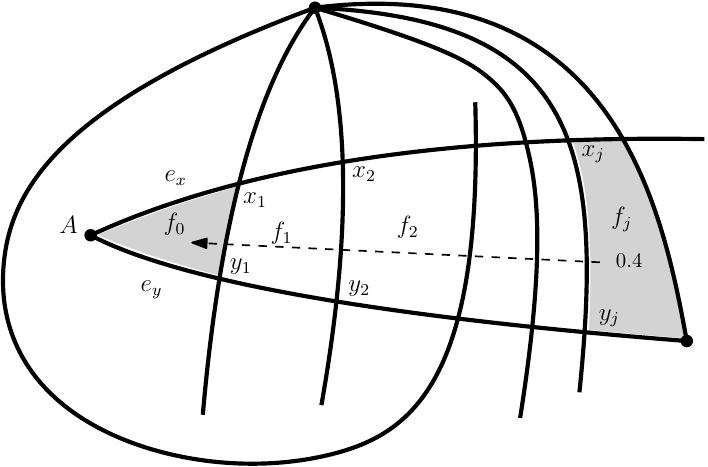}\label{fig:step2}}
	\caption{The first two discharging steps.} 
	\label{fig:step1-2}
\end{figure}

Observe that since the relations being an edge-neighbor at a certain edge of $M(G)$
and being an opposite edge in a $0$-quadrilateral are both one-to-one,
it follows that indeed there cannot be another triangle besides $f_0$ that is a wedge-neighbor of $f_j$ at $x_jy_j$.
Note also that since $e_x$ and $e_y$ already intersect at $A$ (and because by definition $f_j$ cannot be a $0$-quadrangle),
either $|f_j| \geq 5$ or $|f_j|=4$ and $|V(f_j)| \geq 1$.
%Let us summarize these observations for future reference.

\begin{obs}\label{obs:one-wedge-neighbor}
Let $f\in F'$ be a face and let $e$ be one of its edges.
Then there is at most one triangle $t$ such that $t$ is a wedge-neighbor of $f$ at $e$.
If such a triangle exists, then either $|f| \geq 5$ or $|f|=4$ and $|V(f)| \geq 1$.
\end{obs}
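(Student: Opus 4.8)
The statement to prove is Observation~\ref{obs:one-wedge-neighbor}: for a face $f$ and one of its edges $e$, there is at most one triangle $t$ that is a wedge-neighbor of $f$ at $e$, and if such a $t$ exists then either $|f|\geq 5$, or $|f|=4$ and $|V(f)|\geq 1$. The plan is to unwind the definition of wedge-neighbor carefully and to exploit the two one-to-one correspondences already highlighted in the text: (i)~each edge of $M(G)$ lies on exactly two faces, so being an edge-neighbor at a fixed edge determines a unique partner face; and (ii)~inside a $0$-quadrilateral the map sending an edge to the edge opposite it is a fixed-point-free involution. These two facts let us ``run the wedge chain backwards'' from $f$ at $e$ in a deterministic way.

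First I would set up the backward chain: suppose $t$ is a triangle that is a wedge-neighbor of $f$ at $e$. By the definition applied to $t=f_0$, there is a sequence $f_0,f_1,\dots,f_j=f$ of faces with $f_0$ a $1$-triangle, $f_1,\dots,f_{j-1}$ all $0$-quadrilaterals, $f_i$ the edge-neighbor of $f_{i-1}$ across a designated edge, and $e=x_jy_j$ the distinguished edge of $f=f_j$ through which the chain enters. Starting from $(f,e)$, the other face incident to $e$ is uniquely determined (fact (i)); call it $f_{j-1}$. If $f_{j-1}$ is not a $0$-quadrilateral the chain cannot have passed through it, so no such $t$ reaches $f$ via $e$ at all — this is consistent with the claim. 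If $f_{j-1}$ is a $0$-quadrilateral, the edge of $f_{j-1}$ opposite $e$ is uniquely determined (fact (ii)), and it has a unique other incident face $f_{j-2}$; iterate. At each step the face and the entry edge are forced, so the entire chain $f_j,f_{j-1},\dots$ is uniquely reconstructed, and it terminates the first time the current face fails to be a $0$-quadrilateral. If that terminal face is a triangle, it is the unique candidate for $t$; if not, there is no wedge-neighbor triangle of $f$ at $e$. (One must also check the chain does not run off to infinity: this is exactly the argument already given in the text via the fact that only finitely many — at most $|E(G)|-2$ — edges cross $e_x$ and $e_y$, together with $f_i\neq f_j$ for $i<j$, so the backward chain cannot revisit a face either.) This establishes uniqueness.

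For the second assertion, assume such a triangle $t=f_0$ exists, with apex $A\in V(G)$ and with $e_x,e_y$ the two edges of $G$ through $A$ carrying the chain. As the text already observes, $e_x$ and $e_y$ cross at $A$, and by the definition of $j$ the face $f_j=f$ is \emph{not} a $0$-quadrilateral. I would argue that $f$ cannot be a triangle (other than possibly $f_0$ itself, but $f_0$ is a $1$-triangle so that case is already covered): the two boundary edges of $f$ incident to $x_j$ and $y_j$ lie along $e_x$ and $e_y$ respectively, and since these two edges of $G$ meet only at $A$, if $f$ were a triangle its third vertex would have to be $A$, making $f$ a $1$-triangle; but then $f$ would itself be a legitimate chain-endpoint with its own wedge, and the relevant edges $e_x,e_y$ would bound $f$ on the side toward $x_jy_j$, which forces $f=f_0$ again. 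Hence $|f|\geq 4$, and if $|f|=4$ then, not being a $0$-quadrilateral, it has $|V(f)|\geq 1$, as claimed.

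The main obstacle I anticipate is the careful bookkeeping in the second paragraph: one has to verify that the ``designated opposite edge'' in each $0$-quadrilateral in the backward direction is forced and coincides with the one used in the forward definition (i.e., that $e_x$ contains the $x$-side vertices and $e_y$ the $y$-side vertices consistently along the whole chain), and that no $0$-triangle sneaks into the chain — but this is ruled out since $M(G)$ has no $0$-triangles at all, and simplicity of $G$ plus the fact that $e_x,e_y$ each cross any other edge at most once pins down the geometry. Once this is nailed down, the uniqueness is essentially automatic from the two one-to-one relations, and the size bound on $f$ follows from the crossing of $e_x$ and $e_y$ at $A$ exactly as sketched.
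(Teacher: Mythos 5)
Your proposal is correct and follows essentially the same route as the paper: uniqueness comes from the fact that being an edge-neighbor at a fixed edge of $M(G)$ and being the opposite edge in a $0$-quadrilateral are both one-to-one relations, so the wedge can be reconstructed backwards deterministically from $(f,e)$, and the size claim comes from the fact that $e_x$ and $e_y$ already meet at $A$ (so $f$ cannot be a triangle) together with $f$ not being a $0$-quadrilateral by the definition of the wedge-neighbor. Your unwinding of the backward chain and of the non-triangle argument is just a more detailed version of the paper's one-sentence justification preceding the observation.
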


Recall that the only faces with a negative charge are the $1$-triangles.
In order to charge them, we discharge every wedge-neighbor of a $1$-triangle.

\medskip\noindent\textbf{Step 2: Charging $1$-triangles.}
Let $f_0$ be a $1$-triangle, let $x_1,y_1$ be its vertices which are crossing points, and let $f_j$ be
its wedge-neighbor at $x_1y_1$ as defined above.
Then $f_j$ sends $0.4$ units of charge to $f_0$ \emph{through} $x_jy_j$ (see Figure~\ref{fig:step2}). $\hfill\leftrightsquigarrow$\medskip

Note that if a face $f$ contributes charge through one of its edges $xy$ in Step~2, then $x,y \in V' \setminus V(G)$. Therefore, the number of such edges on the boundary of $f$ is at most $|f|-|V(f)|-1$ if $0 < |V(f)| < |f|$.
Considering this fact, the discharging steps and an easy case analysis we get:

\begin{obs}\label{obs:ch_2}
	Let $f \in F'$ be a face. If $|V(f)|=|f|$, then $ch_2=1.6|f|-4$. If $|V(f)|=0$, then $ch_2(f) \ge 0.6|f|-4$ .Otherwise, $ch_2(f) \ge 0.6|f|+|V(f)|-3.6$. Specifically,
	\begin{packed_item}
		\vspace{-3mm}
		\item if $f$ is a $1$-triangle, then $ch_2(f)=0$;
		\item if $f$ is a $2$-triangle, then $ch_2(f)=0.2$; 
		\item if $f$ is a $0$-quadrilateral, then  $ch_2(f)=0$;
		\item if $f$ is a $1$-quadrilateral, then $ch_2(f) \geq -0.2$;
		\item if $f$ is a $0$-pentagon, then $ch_2(f) \geq -1$;
		\item if $f$ is a $0$-hexagon, then $ch_2(f) \geq -0.4$; and
		\item if $f$ is none of the above, then $ch_2(f) \ge 0$.
	\end{packed_item}
\end{obs}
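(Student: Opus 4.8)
The plan is to verify Observation~\ref{obs:ch_2} by simply tracking the charge of a face $f$ through Steps~0, 1, and 2, and then reading off each of the itemized special cases. Recall that $ch_0(f) = |f| + |V(f)| - 4$. In Step~1, $f$ loses exactly $0.4$ for each original vertex on its boundary, so $ch_1(f) = |f| + 0.6|V(f)| - 4$. In Step~2, $f$ may lose an additional $0.4$ through each of its edges $xy$ with $x,y \in V' \setminus V(G)$ at which it is a wedge-neighbor of some $1$-triangle, \emph{and} it may gain $0.4$ when $f$ itself is a $1$-triangle receiving from its own wedge-neighbor. I will treat these three regimes separately.

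First, if $|V(f)| = |f|$, then every edge of $f$ has an original endpoint, so $f$ sends nothing in Step~2, and $f$ is not a $1$-triangle (as $n \ge 9$ forces $|f| = |V(f)| \ge 3$, and a $3$-triangle gains nothing), giving $ch_2(f) = ch_1(f) = 1.6|f| - 4$. Second, if $|V(f)| = 0$, then $f$ is not a $1$-triangle either, so it only potentially loses in Step~2; since $f$ has $|f|$ edges, the crude bound is a loss of at most $0.4|f|$, yielding $ch_2(f) \ge |f| - 4 - 0.4|f| = 0.6|f| - 4$ (in fact Observation~\ref{obs:one-wedge-neighbor} gives better, but this suffices). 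Third, if $0 < |V(f)| < |f|$, then as noted in the text the number of edges of $f$ with both endpoints crossing points is at most $|f| - |V(f)| - 1$, so $f$ loses at most $0.4(|f| - |V(f)| - 1)$ in Step~2; combining with a possible gain of $0.4$ only when $f$ is a $1$-triangle (handled as a special case below), the generic estimate is $ch_2(f) \ge |f| + 0.6|V(f)| - 4 - 0.4(|f| - |V(f)| - 1) = 0.6|f| + |V(f)| - 3.6$.

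For the itemized list I would just substitute: a $1$-triangle has $ch_1 = 3 + 0.6 - 4 = -0.4$ and gains $0.4$ in Step~2 (this is exactly the content of Step~2, and the gain is unique by the wedge-neighbor structure), so $ch_2 = 0$; a $2$-triangle has no crossing-point--only edge (its single such candidate edge would make it a $0$- or $1$-triangle context — more carefully, the only edge not incident to an original vertex joins the two crossing points, but then $f$ being a wedge-neighbor there would need $|f| \ge 4$ or $|V(f)| \ge 1$... in fact a $2$-triangle is never a wedge-neighbor), so $ch_2 = 3 + 1.2 - 4 = 0.2$; a $0$-quadrilateral: by definition of the wedge chain a $0$-quadrilateral is an internal link $f_i$ of a wedge and never the terminal wedge-neighbor $f_j$, so it sends nothing, and $ch_2 = 4 + 0 - 4 = 0$; a $1$-quadrilateral has at most $|f| - |V(f)| - 1 = 2$ crossing-only edges, so $ch_2 \ge 4 + 0.6 - 4 - 0.8 = -0.2$; a $0$-pentagon loses at most $0.4 \cdot 5$... but Observation~\ref{obs:one-wedge-neighbor} plus the fact that consecutive edges sharing a crossing point cannot both be wedge-transmitting forces at most $4$ transmitting edges, giving $ch_2 \ge 5 - 4 - 1.6 = -0.6$; hmm, the claimed bound is $-1$, which is the even cruder $5 - 4 - 0.4\cdot 5 = -1$, so I will just use the trivial bound; a $0$-hexagon similarly $ch_2 \ge 6 - 4 - 0.4 \cdot ?$ — the claimed $-0.4$ needs at most $4$ transmitting edges, $6 - 4 - 1.6 = 0.4$, wait that is $+0.4$; let me instead note at most $5$ transmitting (since $6$ would need all edges, impossible by the parity/adjacency constraint), $6 - 4 - 2 = 0$, still not $-0.4$; the safe reading is: a $0$-hexagon can have all $6$ edges crossing-only but at most... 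I will establish the needed bound on transmitting edges directly from the wedge structure and Observation~\ref{obs:one-wedge-neighbor}.

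The main obstacle I anticipate is precisely pinning down, for the small faces ($0$-pentagons, $0$-hexagons, $1$-quadrilaterals), exactly how many of their edges can simultaneously transmit $0.4$ in Step~2. The generic counting bound $|f| - |V(f)| - 1$ is not always tight enough for the stated constants, so I expect to need a short geometric argument: at an edge $e$ of $f$, $f$ transmits through $e$ only if $f$ is the terminal face $f_j$ of some wedge, which by Observation~\ref{obs:one-wedge-neighbor} requires $|f| \ge 5$ or $|f| = 4$ with $|V(f)| \ge 1$; moreover two edges of $f$ meeting at a single crossing point $v$ cannot both be of the form ``opposite edge of a $0$-quadrilateral in a wedge chain passing through $f$'' without $f$ itself being that $0$-quadrilateral — contradicting $|f| \ge 5$ — so the transmitting edges form, roughly, a matching-like set on the boundary cycle, which caps their number. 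Carrying out this local case analysis cleanly, while keeping the bookkeeping consistent with the exact fractions $0.2, -0.2, -1, -0.4$ claimed, is the delicate part; everything else is bookkeeping with the formula $ch_2(f) \ge |f| + 0.6|V(f)| - 4 - 0.4 t(f)$ where $t(f)$ is the number of transmitting edges, together with the $+0.4$ correction for $1$-triangles.
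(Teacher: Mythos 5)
Your route is the same as the paper's, which is just bookkeeping: $ch_1(f)=|f|+0.6|V(f)|-4$; in Step~2 a face loses at most $0.4$ per boundary edge both of whose endpoints are crossing points (at most one $1$-triangle per edge, by Observation~\ref{obs:one-wedge-neighbor}); when $0<|V(f)|<|f|$ there are at most $|f|-|V(f)|-1$ such edges; $0$-quadrilaterals and triangles never transmit (a terminal wedge-neighbor has $|f|\ge 5$, or $|f|=4$ with $|V(f)|\ge 1$); and a $1$-triangle receives exactly $0.4$. That is all the paper uses, and it already yields every stated constant. The ``main obstacle'' you anticipate is illusory: the observation asserts only \emph{lower} bounds, so a sharper count of transmitting edges can only overshoot, which is harmless. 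In particular, for a $0$-hexagon your own generic bound for $|V(f)|=0$, namely $ch_2(f)\ge 0.6|f|-4$, evaluates at $|f|=6$ to exactly $-0.4$, the claimed constant; your computations ``$+0.4$'' and ``$0$, still not $-0.4$'' came from miscounting the transmitting edges and from treating $-0.4$ as a value to be matched rather than a lower bound to be met. Likewise the $0$-pentagon needs only the trivial $t\le 5$ to get $\ge -1$, and the $1$-quadrilateral only $t\le |f|-|V(f)|-1=2$ to get $\ge -0.2$, both of which you already have.

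The fallback you propose to close the hexagon case --- that two edges of $f$ meeting at a crossing point cannot both transmit in Step~2, so the transmitting edges form a ``matching-like'' set --- is false as stated, and fortunately unnecessary. A face can be the terminal wedge-neighbor of $1$-triangles at two consecutive edges; indeed Proposition~\ref{prop:0.2-in-Step-4} and the treatment of $0$-pentagons and $0$-hexagons in Corollary~\ref{cor:Step4} are devoted precisely to faces that contribute charge through consecutive edges (a $0$-hexagon can transmit through all six of its edges). Drop that auxiliary claim; with the generic counting bounds your derivation of the three general formulas and the itemized substitutions is complete and coincides with the paper's (terse) justification of the observation.
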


The remaining discharging steps will thus handle $1$-quadrilaterals, $0$-pentagons and $0$-hexagons with a negative charge.
In the third discharging step some of these $1$-quadrilaterals are charged.

\medskip
\noindent\textbf{Step 3: discharging through common edges.} 
Let $f \in F'$ be a face such that $ch_2(f) > 0$ and let $Q_f$ be the (multi)set of $1$-quadrilaterals that are edge-neighbors of $f$ at one of its edges which is incident to a vertex in $V(f)$ and whose charge after Step~2 is negative
(if a $1$-quadrilateral in $Q_f$ has more than one such common edge with $f$, then it appears with multiplicity in $Q_f$). 
Then $f$ sends $\min\{0.1,ch_2(f)/|Q_f|\}$ units of charge to each face in $Q_f$ through the edge that they share (see Figure~\ref{fig:step3}).$\hfill\leftrightsquigarrow$
\medskip

\begin{figure}[t]
	\centering
	\subfloat[Step 3: $f$ sends $0.1$ units of charge to the $1$-quadrilateral which is its edge-neighbor at $Ax$.]{\includegraphics[width= 5.5cm]{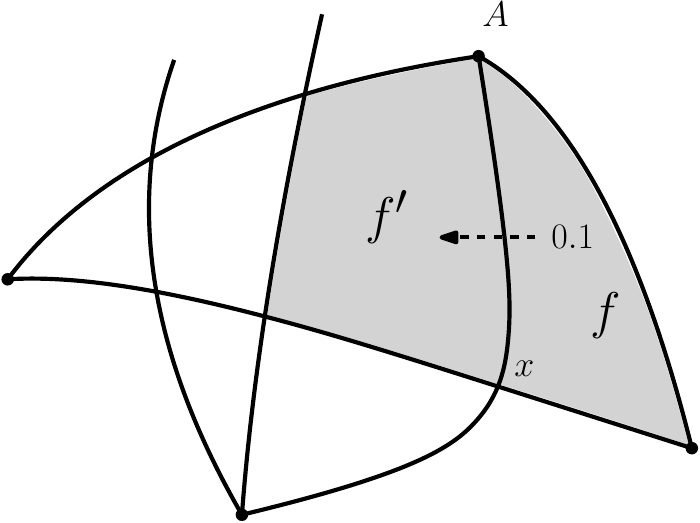}\label{fig:step3}}
	\hspace{5mm}
	\subfloat[Step 4: $f$ sends charge to the $0$-pentagon which is its vertex-neighbor at $x$.]{\includegraphics[width= 6cm]{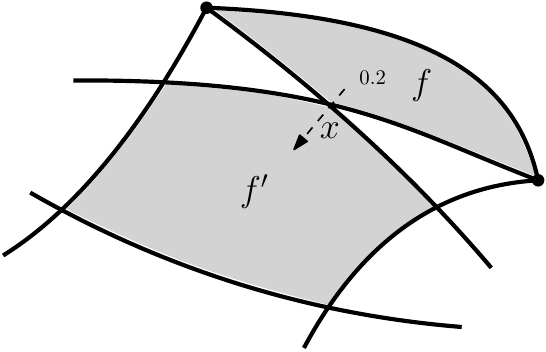}\label{fig:step4}}
	\caption{Discharging Steps 3 and 4.} 
	\label{fig:step3-4}
\end{figure}

This discharging rule was phrased such that it is clear that a face cannot contribute more than its excess charge. However, we claim that $f$ contributes exactly $0.1$ units of charge through each of the above edges.

\begin{prop}\label{prop:at-least-0.1}
	Let $f$ be a $1$-quadrilateral such that $ch_2(f)<0$, let $Ax$ be an edge of $f$ such that $V(f)=\{A\}$ and let $f'$ be the edge-neighbor of $f$ at $Ax$. 
	Then $f'$ contributes $0.1$ units of charge to $f$ in Step~3, unless $f'$ is a $1$-quadrilateral such that $ch_2(f')<0$ or a $1$-triangle.
\end{prop}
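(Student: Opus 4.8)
The plan is to analyze the possible structures of the edge-neighbor $f'$ of a negatively-charged $1$-quadrilateral $f$ across an edge $Ax$ with $A\in V(G)$ and $x$ a crossing point, and show that unless $f'$ falls into one of the two stated exceptional types, $f'$ has enough excess charge after Step~2 and picks exactly $0.1$ for $f$ in Step~3. First I would recall from Observation~\ref{obs:ch_2} that $ch_2(f')\ge 0.1$ whenever $f'$ is not a $1$-triangle, a $0$-quadrilateral, a $1$-quadrilateral with $ch_2(f')<0$, a $0$-pentagon, or a $0$-hexagon (since for all other faces $ch_2(f')\ge 0.2 > 0$, except we must double-check the borderline cases like $2$-triangle with $ch_2 = 0.2$ and small faces). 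So the work is to rule out, or handle, the cases where $f'$ is a $0$-quadrilateral, a $0$-pentagon, or a $0$-hexagon, and to confirm that in the remaining admissible cases $f$ indeed appears in $Q_{f'}$ with the right multiplicity so that $f'$ sends $0.1$ (not a smaller value $ch_2(f')/|Q_{f'}|$) to $f$.

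The key geometric step is to exploit that the shared edge $Ax$ has one endpoint $A\in V(G)$: since $f'$ is the edge-neighbor of $f$ at $Ax$, the vertex $A$ lies on $f'$, so $|V(f')|\ge 1$. This immediately eliminates the cases $f'$ a $0$-quadrilateral, $0$-pentagon, or $0$-hexagon, because those have no original vertices on their boundary. Hence $f'$ is either a $1$-triangle (excluded by hypothesis), a $1$-quadrilateral with $ch_2(f')<0$ (excluded by hypothesis), or a face with $ch_2(f')\ge 0.2$; in particular $ch_2(f') > 0$, so $f'$ does participate in Step~3. (I should also note that $f'$ cannot be a $1$-triangle that was itself charged up to $0$ in Step~2 and then would have nothing to give — but a $1$-triangle is explicitly excluded, so that is moot; and a $2$-triangle has $ch_2 = 0.2 > 0$, so it does contribute.)

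The remaining point is quantitative: I must show that $f'$ sends exactly $0.1$ to $f$ through $Ax$, i.e.\ that $ch_2(f')/|Q_{f'}| \ge 0.1$ where $Q_{f'}$ is the multiset of negatively-charged $1$-quadrilateral edge-neighbors of $f'$ across edges incident to $V(f')$. Here I would bound $|Q_{f'}|$ in terms of $|f'|$ and $|V(f')|$: each such edge must be incident to an original vertex of $f'$, and — reusing the reasoning behind Observation~\ref{obs:ch_2} — an edge that hosts this kind of transfer has exactly one original endpoint, so the number of candidate edges is at most $\min\{|V(f')|,\, |f'|-|V(f')|\}$, with the relevant small cases being a $1$-quadrilateral or a $2$-triangle or similar. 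Comparing $ch_2(f')\ge 0.6|f'| + |V(f')| - 3.6$ (or the special values listed) against $0.1\,|Q_{f'}|$ in each of these few cases shows $ch_2(f')/|Q_{f'}|\ge 0.1$, so the $\min$ in the Step~3 rule equals $0.1$.

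The main obstacle I expect is the bookkeeping in this last case analysis: making sure that when $f'$ is itself a small face with limited excess (for instance a $2$-triangle with only $0.2$ to spare, or a $1$-quadrilateral of charge $\ge -0.2$ that happens to be nonnegative), it does not have too many demanding $1$-quadrilateral neighbors — this requires carefully counting how many edges of $f'$ can simultaneously be of the "one original endpoint" type and border a negatively-charged $1$-quadrilateral, and checking each configuration against the charge bound. Once those few cases are verified, the proposition follows.
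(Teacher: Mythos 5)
Your overall strategy is the same as the paper's: use the fact that the shared edge $Ax$ forces $|V(f')|\ge 1$ (and $x$ forces at least one crossing vertex on $f'$), which rules out $0$-quadrilaterals, $0$-pentagons and $0$-hexagons, and then verify quantitatively that $ch_2(f')/|Q_{f'}|\ge 0.1$ in every remaining case. However, your key counting claim is wrong as stated: the number of edges of $f'$ with exactly one original endpoint is \emph{not} bounded by $\min\{|V(f')|,\,|f'|-|V(f')|\}$; it can be as large as $2\min\{|V(f')|,\,|f'|-|V(f')|\}$. Concretely, a $2$-triangle with vertices $A,B\in V(G)$ and crossing point $x$ has two such edges ($Ax$ and $Bx$) although your bound gives $1$, and a $2$-quadrilateral whose original and crossing vertices alternate has four such edges although your bound gives $2$. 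These are exactly the delicate cases: a $2$-triangle (or a $1$-quadrilateral with nonnegative charge, or a $1$-pentagon) has only $0.2$ units to spare, and with the correct count of two demanding neighbors the quotient is exactly $0.1$, with no slack. So while the corrected counts still make every inequality go through (as the paper shows: $|Q_{f'}|\le 2$ for the $2$-triangle, nonnegative $1$-quadrilateral and $1$-pentagon; $|Q_{f'}|\le 4$ with $ch_2(f')\ge 0.8$ for the $2$-quadrilateral; $|Q_{f'}|\le |f'|$ for all larger faces), your proof as written rests on a false intermediate bound, and the fact that it happens to survive is only visible after redoing the count correctly.

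The second, smaller issue is that the substantive part of the argument --- the actual case-by-case comparison of $ch_2(f')$ (from Observation~\ref{obs:ch_2}) against $0.1\,|Q_{f'}|$ for the $2$-triangle, the $1$-quadrilateral with $ch_2\ge 0$, the $2$- and $3$-quadrilaterals, the $k$-pentagons and the faces with $|f'|\ge 6$ --- is only announced, not carried out. Since the tight cases leave zero margin, this verification cannot be waved through; spelling it out (with the corrected edge count, and noting that an edge of $f'$ with two original endpoints cannot border a $1$-quadrilateral at all) is precisely what the paper's proof does and what your write-up still needs.
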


\begin{proof}
\begin{comment}
	Let $xy$, $yz$ and $Az$ be the other edges of $f$ and let $(A,B)$ and $(C,D)$ be the edges of $G$ that contain $Ax$ and $xy$, respectively. 
	Note that $(A,B)$ and $(C,D)$ cross at $x$ and therefore their endpoints are distinct.
	Since $f$ contributes charge through $xy$ in Step~2, it follows that $yz$ is contained in an edge of $G$ that is incident to $B$.
	Note first that $f'$ is not a $1$-triangle, for otherwise there is an edge of $G$ that crosses $(X,Y)$ and none of its endpoint is $C$, see Figure~\ref{fig:1-quad-1-tri} for an illustration.
	\begin{figure}
		\centering
		\includegraphics[width= 5cm]{1-quad-1-tri}
		\caption{}
		\label{fig:1-quad-1-tri}		
	\end{figure}
\end{comment}	
	By Observation~\ref{obs:ch_2} we have $ch_2(f') \ge 0.6|f'|+|V(f')|-3.6$ (note that $f'$ has at least one vertex in $V(G)$ and at least one vertex in $V' \setminus V(G)$).
	Clearly, $|Q_{f'}| \le |f'|$.
	Therefore, if $(0.6|f'|+|V(f')|-3.6)/|f'| \ge 0.1$, then $f$ receives $0.1$ units of charge from $f'$. This inequality holds when $|f'| \ge 6$ (recall that $|V(f')| \ge 1$), or if $f'$ is a $k$-pentagon for $k \ge 2$ or a $3$-quadrilateral (recall that $f'$ has at least one vertex which is a crossing point, so it cannot be a $4$-quadrilateral, a $3$-triangle, etc.).
	If $f'$ is a $1$-pentagon, a $1$-quadrilateral with a non-negative charge after Step~2 or a $2$-triangle, then $ch_2(f') \ge 0.2$ and $f'$ has at most two edge-neighbors that are $1$-quadrilaterals. 
	Therefore $f$ receives from $f'$ $0.1$ units of charge in Step~3 in each of these cases.
	Finally, if $f'$ is a $2$-quadrilateral, then $ch_2(f') \ge 0.8$ and $|Q_{f'}|\le 4$, thus $f'$ contributes to $f$ $0.1$ units of charge.
\end{proof}

The fourth discharging step handles $0$-pentagons and $0$-hexagons with a negative charge and possibly some of the $1$-quadrilaterals with a negative charge.

\medskip\noindent\textbf{Step 4: discharging through common vertices.} 
Let $f \in F'$ be a face such that $ch_3(f) > 0$ and let $S_f$ be the (multi)set of faces that are vertex-neighbors of $f$ and whose charge after Step~3 is negative (if a face is a vertex-neighbor of $f$ at more than one vertex, then it appears with multiplicity in $S_f$).
Then $f$ sends $\min\{0.2,ch_3(f)/|S_f|\}$ units of charge to each face in $S_f$ through the vertex that they share (see Figure~\ref{fig:step4}).$\hfill\leftrightsquigarrow$
\medskip

We will show that after the fourth discharging step the charge of every $0$-pentagon and $0$-hexagon is non-negative.
Before doing so, we first introduce some useful notations and propositions. 
When considering a $0$-$k$-gon $f$ (where $k=5$ or $6$),
we denote the vertices of $f$ by $v_0,\ldots,v_{k-1}$ listed in a clockwise cyclic order.
The edges of $f$ are $e_i=v_{i-1}v_i$, for $i=0,\ldots,k-1$ (addition and subtraction are modulo $k$).
For every edge $e_i=v_{i-1}v_i$ we denote by $(A_i,B_i)$ the edge of $G$ that contains $e_i$,
such that $v_{i-1}$ is between $A_i$ and $v_{i}$ on $(A_i,B_i)$.
Denote by $t_i$ the $1$-triangle which is a wedge-neighbor of $f$ at $e_i$, if such a $1$-triangle exists.
Note that if $t_i$ exists, then one of its vertices and the apex of the corresponding wedge is $A_{i+1}=B_{i-1}$ (that is, they represent the same point).
%Its other vertices will be denoted by $x_i$ and $y_i$ such that $x_i$ is contained in $(A_{i-1},B_{i-1})$ and $y_i$ is contained in $(A_{i+1},B_{i+1})$.
Finally, we denote by $f_i$ the vertex-neighbor of $f$ at $v_i$.
See Figure~\ref{fig:0-pentagon-A4-B2} for an example of these notations. 
Note that different names may refer to the same point, however, since $G$ is a simple topological graph we have:

\begin{obs}\label{obs:A_i,B_i}
Let $f$ be a $0$-$k$-gon for $k=5,6$.
For every $i=0,\ldots,k-1$ $v_i$ is a crossing point of $(A_i,B_i)$ and $(A_{i+1},B_{i+1})$, therefore $\{A_{i},B_{i}\} \cap \{A_{i+1},B_{i+1}\} = \emptyset$.
\end{obs}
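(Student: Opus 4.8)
The plan is to argue directly from the local structure of $M(G)$ at the vertex $v_i$, together with the simplicity of $G$. Since $f$ is a $0$-$k$-gon, all of its vertices lie in $V' \setminus V(G)$; in particular $v_i$ is a crossing point of exactly two edges of $G$ and hence has degree $4$ in $M(G)$. The four edge-segments of $M(G)$ incident to $v_i$ therefore split into two opposite pairs, each pair lying on one of the two edges of $G$ that cross at $v_i$.

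The key step I would carry out first is to observe that $e_i = v_{i-1}v_i$ and $e_{i+1} = v_i v_{i+1}$ are \emph{rotationally consecutive} (and not opposite) among the four segments around $v_i$. This is where $2$-connectedness of $M(G)$ (Proposition~\ref{prop:2-connected}) enters: the boundary of $f$ is a simple cycle, and when a simple facial cycle passes through a degree-$4$ vertex it occupies exactly one of the four angular corners there, hence uses two rotationally consecutive edges. Two such consecutive segments at $v_i$ lie on \emph{different} edges of $G$, because the two segments lying on a common edge form an opposite pair. Consequently the edge of $G$ containing $e_i$, namely $(A_i,B_i)$, and the edge of $G$ containing $e_{i+1}$, namely $(A_{i+1},B_{i+1})$, are precisely the two (distinct, since an edge may not cross itself) edges of $G$ crossing at $v_i$. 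This already gives the first assertion, that $v_i$ is a crossing point of $(A_i,B_i)$ and $(A_{i+1},B_{i+1})$.

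For the second assertion I would invoke simplicity: $(A_i,B_i)$ and $(A_{i+1},B_{i+1})$ are distinct edges that intersect at the crossing point $v_i$; if they also shared an endpoint they would intersect in a second point, contradicting the assumption that $G$ is a simple topological graph. Hence $\{A_i,B_i\} \cap \{A_{i+1},B_{i+1}\} = \emptyset$. I do not expect any genuine obstacle here; the only point needing a little care is the claim that a facial boundary uses two rotationally consecutive edges at each vertex it visits, which relies on the faces of $M(G)$ being bounded by simple cycles, i.e., on the $2$-connectedness already secured.
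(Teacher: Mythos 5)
Your proof is correct and follows essentially the same reasoning the paper relies on: the paper states this as an immediate observation justified by the simplicity of $G$ (two crossing edges cannot also share an endpoint), and your argument just spells out the additional local detail that the facial cycle uses two rotationally consecutive segments at the degree-$4$ crossing point $v_i$, so $e_i$ and $e_{i+1}$ lie on the two distinct edges of $G$ crossing there. No gap; this is the intended argument made explicit.
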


The following will also be useful later.

\begin{obs}\label{obs:common-vertex}
Let $f$ be a $0$-$k$-gon for $k=5,6$.
Then for every $i=0,\ldots,k-1$ the edges $(A_i,B_i)$ and $(A_{i+2},B_{i+2})$ both cross $(A_{i+1},B_{i+1})$ and therefore $\{A_i,B_i\} \cap \{A_{i+2},B_{i+2}\} \ne \emptyset$.
\end{obs}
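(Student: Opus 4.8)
\textbf{Plan for the proof of Observation~\ref{obs:common-vertex}.}
The statement asserts a purely local fact: in a $0$-$k$-gon $f$ (with $k=5$ or $6$), any three consecutive boundary edges $e_i, e_{i+1}, e_{i+2}$ lie on edges of $G$, say $(A_i,B_i)$, $(A_{i+1},B_{i+1})$, $(A_{i+2},B_{i+2})$, and the first and third of these must share a common vertex of $G$. I would derive this directly from the hypothesis that $G$ avoids Configuration~I together with Observation~\ref{obs:A_i,B_i}.

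First I would observe that the vertex $v_i$ is a crossing point of $(A_i,B_i)$ and $(A_{i+1},B_{i+1})$, and $v_{i+1}$ is a crossing point of $(A_{i+1},B_{i+1})$ and $(A_{i+2},B_{i+2})$, by Observation~\ref{obs:A_i,B_i}. Since $e_i$ and $e_{i+2}$ are genuine (crossing-free) edge-segments of $M(G)$ sitting on the boundary of the single face $f$, and $f$ is a $0$-$k$-gon (so $e_i \ne e_{i+2}$, as $k \ge 5$ means they are not even the same edge of $M(G)$ and are not contained in a common edge of $G$ — the latter because such an edge would cross $(A_{i+1},B_{i+1})$ twice, contradicting simplicity), the edges $(A_i,B_i)$ and $(A_{i+2},B_{i+2})$ are two distinct edges of $G$, each of which crosses the edge $(A_{i+1},B_{i+1})$. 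Now I invoke the defining property of $G$: any two edges that cross a third edge share a common endpoint. Applying this with the ``third edge'' being $(A_{i+1},B_{i+1})$ and the two crossing edges being $(A_i,B_i)$ and $(A_{i+2},B_{i+2})$ yields immediately that $\{A_i,B_i\} \cap \{A_{i+2},B_{i+2}\} \ne \emptyset$.

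The only point that needs a little care — and the step I expect to be the main (minor) obstacle — is verifying that $(A_i,B_i)$ and $(A_{i+2},B_{i+2})$ really are \emph{distinct} edges of $G$ that genuinely \emph{cross} $(A_{i+1},B_{i+1})$ (as opposed to merely touching it at an endpoint, or being equal to it). Distinctness from $(A_{i+1},B_{i+1})$ follows since they cross it at $v_i$ and $v_{i+1}$ respectively, which are crossing points, not vertices of $G$. Distinctness of $(A_i,B_i)$ from $(A_{i+2},B_{i+2})$: if they were the same edge of $G$, that edge would contain both the non-adjacent edge-segments $e_i$ and $e_{i+2}$ of $M(G)$ and hence would cross $(A_{i+1},B_{i+1})$ at both $v_i$ and $v_{i+1}$, contradicting that $G$ is simple. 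With these distinctness facts in hand, Configuration~I being forbidden gives the conclusion, completing the proof.
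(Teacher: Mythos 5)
Your proof is correct and follows exactly the reasoning the paper leaves implicit for this observation: $v_i$ and $v_{i+1}$ witness that $(A_i,B_i)$ and $(A_{i+2},B_{i+2})$ both cross $(A_{i+1},B_{i+1})$, so the defining property of adjacency-crossing drawings (Configuration~I forbidden) forces a common endpoint. Your extra checks that the two edges are distinct and genuinely cross (using simplicity of $G$ and that the face's vertices are crossing points) are sound but only make explicit what the paper treats as immediate.
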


\begin{prop}\label{prop:A4=B2}
Let $f$ be a $0$-pentagon and suppose that $t_i$ and $t_{i+1}$ exist, for some $0 \le i \le 4$.
Then $A_{i-1} = B_{i+2}$.
\end{prop}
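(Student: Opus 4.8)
\textbf{Proof proposal for Proposition~\ref{prop:A4=B2}.}

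The plan is to derive the identity purely combinatorially, without any new geometry, from the two structural facts already recorded for $0$-pentagons — Observation~\ref{obs:A_i,B_i} (the defining edges of consecutive sides are vertex-disjoint) and Observation~\ref{obs:common-vertex} (the defining edges of two sides at ``index-distance two'' share a vertex) — together with the remark, made when $t_i$ was introduced, that whenever $t_i$ exists one has $A_{i+1}=B_{i-1}$ (the apex of the associated wedge).

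First I would restate the hypotheses in usable form: since $t_i$ exists, $B_{i-1}=A_{i+1}$, and since $t_{i+1}$ exists, $B_i=A_{i+2}$. Next I would invoke Observation~\ref{obs:common-vertex} with the index $i+2$: as $f$ is a pentagon all indices are taken modulo $5$, so $i+4\equiv i-1$, and the observation gives $\{A_{i+2},B_{i+2}\}\cap\{A_{i-1},B_{i-1}\}\neq\emptyset$. This is the one mildly non-obvious step — one has to notice that the pentagon's wrap-around is exactly what lets ``distance two'' reach the pair of non-consecutive sides $e_{i-1}$ and $e_{i+2}$. Substituting the two coincidences from the previous sentence, namely $B_{i-1}=A_{i+1}$ and $A_{i+2}=B_i$, turns this into $\{A_{i-1},A_{i+1}\}\cap\{B_i,B_{i+2}\}\neq\emptyset$.

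Finally I would rule out three of the four possible coincidences using Observation~\ref{obs:A_i,B_i}: $A_{i-1}=B_i$ is impossible because sides $e_{i-1}$ and $e_i$ are consecutive; $A_{i+1}=B_i$ is impossible because $e_i$ and $e_{i+1}$ are consecutive; and $A_{i+1}=B_{i+2}$ is impossible because $e_{i+1}$ and $e_{i+2}$ are consecutive. The only surviving possibility is $A_{i-1}=B_{i+2}$, which is the claim. I do not expect a genuine obstacle here: the proof is short, and the only points needing care are being exhaustive in this last case analysis and keeping the modular indices straight (in particular that $i+2$, not $i-1$, is the correct base index for Observation~\ref{obs:common-vertex}).
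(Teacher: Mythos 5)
Your proof is correct and is essentially the paper's own argument: both use the apex identities $B_{i-1}=A_{i+1}$, $B_i=A_{i+2}$, apply Observation~\ref{obs:common-vertex} to the sides $e_{i+2}$ and $e_{i-1}$ (the paper does this with $i=0$, i.e.\ $\{A_2,B_2\}\cap\{A_4,B_4\}\neq\emptyset$), and then eliminate the other three coincidences via Observation~\ref{obs:A_i,B_i}. The only difference is cosmetic: you keep the general index $i$ where the paper normalizes to $i=0$.
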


\begin{proof}
Assume without loss of generality that $i=0$.
The existence of the $1$-triangles $t_0$ and $t_1$ as wedge-neighbors of $f$ at $e_0$ and $e_1$, respectively, implies that $B_4=A_1$ is the apex of the wedge of $t_0$ and that $B_0=A_2$ is the apex of $t_1$, see Figure~\ref{fig:0-pentagon-A4-B2}.
\begin{figure}[t]
	\centering
	\includegraphics[width= 7cm]{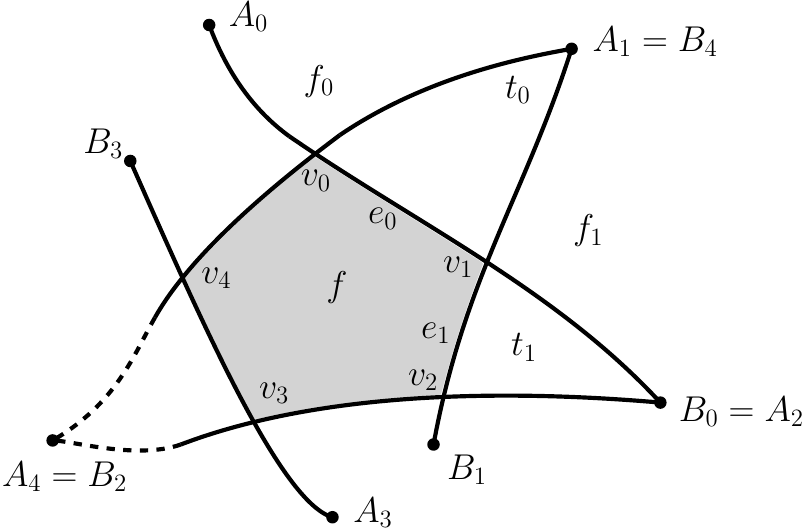}
	\caption{A $0$-pentagon $f$ and a corresponding notation of its vertices, vertex-neighbors, edges and the edges of $G$ that contain its edges. If $t_0$ and $t_1$ exist, then $A_4=B_2$.}
	\label{fig:0-pentagon-A4-B2}		
\end{figure}
It follows from Observation~\ref{obs:common-vertex} that $\{A_2,B_2\} \cap \{A_4,B_4\} \ne \emptyset$.
By Observation~\ref{obs:A_i,B_i} we have $B_4 = A_1 \ne A_2,B_2$ and $A_2 = B_0 \ne A_4, B_4$.
Therefore $A_4 = B_2$. 
\end{proof}

\begin{prop}\label{prop:0-pentagon-wedge-neighbors}
Let $f$ be a $0$-pentagon and suppose that $t_i$ and $t_{i+1}$ exist, for some $0 \le i \le 4$.
Then $t_i$ and $t_{i+1}$ are the edge-neighbors of $f$ at $e_i$ and $e_{i+1}$, respectively.
\end{prop}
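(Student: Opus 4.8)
\textit{Proof proposal.} The plan is to argue by contradiction, working inside the wedge of $t_i$. First I would note that the $0$-pentagon $f$ has a reflectional symmetry interchanging the two edges $e_i$ and $e_{i+1}$ (and hence interchanging $t_i$ and $t_{i+1}$ and the roles ``is the edge-neighbor of $f$ at $e_i$'' and ``is the edge-neighbor of $f$ at $e_{i+1}$''), so it suffices to prove that $t_i$ is the edge-neighbor of $f$ at $e_i$. Normalizing, assume $i=0$, so $t_0$ and $t_1$ both exist. Recall from the discussion preceding Observation~\ref{obs:A_i,B_i} that the apex of the wedge of $t_0$ is $A_1=B_4$ and that of $t_1$ is $A_2=B_0$, and recall from Proposition~\ref{prop:A4=B2} that $A_4=B_2$. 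Since a vertex of $V'\setminus V(G)$ lies on exactly two edges of $G$, the cap $x_jy_j$ of the wedge of $t_0$ is $e_0\subset(A_0,B_0)$, and (after possibly renaming $x_j,y_j$) the two boundary edges of this wedge are $e_x=(A_4,B_4)$ and $e_y=(A_1,B_1)$, meeting at the apex $A_1=B_4$; here we use Observation~\ref{obs:A_i,B_i} to see that $(A_4,B_4),(A_1,B_1)\neq(A_0,B_0)$.

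Now suppose for contradiction that $t_0$ is not the edge-neighbor $f'$ of $f$ at $e_0$. Then $f'=f_{j-1}$ is a $0$-quadrilateral, so the wedge $t_0=f_0,f_1,\dots,f_j=f$ has length $j\ge 2$. For $1\le k\le j$ let $g^{(k)}$ be the edge of $G$ containing the side $x_ky_k$ shared by $f_{k-1}$ and $f_k$; the wedge construction makes these edges pairwise distinct, with $g^{(j)}=(A_0,B_0)$, and for $1\le k\le j-1$ each $g^{(k)}$ crosses $(A_4,B_4)$ (at $x_k$) and $(A_1,B_1)$ (at $y_k$). The key step is to show that every $g^{(k)}$ is incident to $B_0$: since $g^{(k)}$ and $(A_0,B_0)$ both cross $(A_4,B_4)$, the adjacency-crossing hypothesis forces them to share an endpoint; since $g^{(k)}$ and $(A_2,B_2)$ both cross $(A_1,B_1)$, they share an endpoint too, and this endpoint cannot be $B_2=A_4$ (an endpoint of $(A_4,B_4)$, which $g^{(k)}$ crosses), so it is $A_2=B_0$. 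Thus $g^{(k)}=(B_0,C_k)$ with the $C_k$ pairwise distinct and $C_j=A_0$. Finally, $(A_3,B_3)$ crosses $(A_4,B_4)$ at $v_3$ but is not incident to $B_0$ (it crosses $(A_2,B_2)=(B_0,B_2)$ at $v_2$), so applying adjacency-crossing to $g^{(k)}$ and $(A_3,B_3)$, which both cross $(A_4,B_4)$, gives $C_k\in\{A_3,B_3\}$ for every $k$. As the $C_k$ are distinct and lie in a $2$-element set, $j\le 2$.

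This leaves the case $j=2$, which I expect to be the main obstacle. There the three pairwise-adjacent edges $(A_0,B_0)$, $g^{(1)}=(B_0,C_1)$ and $(A_3,B_3)$ form a triangle whose three sides are all crossed by $(A_4,B_4)$ --- a triangle-crossing, which is \emph{not} forbidden for adjacency-crossing drawings --- with $t_0$ and $f'=f_1$ lying on the two sides of the side $x_1y_1$ of this triangle. The plan to finish is to extract a genuine contradiction from the global position of this triangle together with the remaining structure: one can locate $B_4=A_1$ and the arc $(A_1,B_1)$ relative to the Jordan curve bounded by $(A_0,B_0)\cup g^{(1)}\cup(A_3,B_3)$, and use that $t_1$ is a \emph{face} incident to $B_0$ (hence no edge of $G$ leaves $B_0$ into its angular sector) and that the wedge chain of $t_0$ enters the triangle only through $e_x$ and $e_y$; combining these should force a self-intersecting edge, two edges crossing twice, or Configuration~I. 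I would spend most of the proof on this last case analysis, and then invoke the reflectional symmetry to conclude the statement for $t_{i+1}$ at $e_{i+1}$.
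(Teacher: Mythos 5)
Your reduction is sound as far as it goes: the identification of the wedge boundary as $(A_4,B_4)$ and $(A_1,B_1)$ with apex $A_1=B_4$, the use of Proposition~\ref{prop:A4=B2}, and the argument that each intermediate edge $g^{(k)}$ must be incident to $B_0=A_2$ (via the edge $(A_1,B_1)$ crossed by both $g^{(k)}$ and $(A_2,B_2)$, and simplicity ruling out $B_2=A_4$) and must have its other endpoint in $\{A_3,B_3\}$ (via $(A_4,B_4)$) are all correct, and this correctly forces $j\le 2$. This is organized a bit differently from the paper, which analyzes only the single edge $g^{(1)}$ containing the cap of $t_0$ and splits on which endpoint it shares with $(A_0,B_0)$, but up to this point your argument is a legitimate variant. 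The genuine gap is that you then stop: the case $j=2$ is exactly the surviving configuration, and you offer only a plan (``locate $B_4$ relative to the Jordan curve\dots combining these should force\dots''), not a proof. Since the whole content of the proposition is to exclude this configuration, deferring it to an unexecuted Jordan-curve/triangle-crossing analysis leaves the statement unproven; moreover the sketched route is heavier machinery than is needed and it is not clear it would go through as described.

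In fact the missing case closes with one more application of the same incidence toolkit, with no topological positioning argument: by Observation~\ref{obs:common-vertex}, $(A_1,B_1)$ and $(A_3,B_3)$ both cross $(A_2,B_2)$, so they share an endpoint; since $A_1=B_4\notin\{A_3,B_3\}$ by Observation~\ref{obs:A_i,B_i}, this endpoint is $B_1\in\{A_3,B_3\}$, and $B_1\ne A_0$ (again Observation~\ref{obs:A_i,B_i}). In your notation for $j=2$ you already have $A_0\in\{A_3,B_3\}$ and $C_1\in\{A_3,B_3\}\setminus\{A_0\}$, so $B_1=C_1$. But then $g^{(1)}=(B_0,C_1)$ meets $(A_1,B_1)$ twice, at its crossing point $y_1$ on the boundary of $t_0$ and at the common endpoint $C_1=B_1$, contradicting the simplicity of the drawing. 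This is essentially how the paper finishes its own case analysis (its subcases $A=A_3$ and $A=B_3$), so your proposal is recoverable, but as submitted it is incomplete precisely at the step that carries the difficulty.
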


\begin{proof}
Assume without loss of generality that $i=0$.
The existence of the $1$-triangles $t_0$ and $t_1$ implies that $B_4=A_1$ is the apex of the wedge of $t_0$ and that $B_0=A_2$ is the apex of $t_1$.
By Proposition~\ref{prop:A4=B2} we also have $A_4=B_2$.
By symmetry it is enough to show that $t_0$ is the edge-neighbor of $f$ at $e_0$.

\smallskip
Suppose for contradiction it is not, that is, the wedge of $t_0$ contains at least one $0$-quadrilateral.
Let $(A,B)$ be the edge of $G$ that contains the edge of $t_0$ which is not incident to $A_1$. 
Note that $(A,B)$ crosses $(A_1,B_1)$ and $(A_4,B_4)$ at vertices of $t_0$.
Since $(A,B)$ and $(A_0,B_0)$ both cross $(A_1,B_1)$, they must share a vertex.
Suppose first that this common vertex is $A_0$ and assume without loss of generality that $A=A_0$.
Since $(A_2,B_2)$ also crosses $(A_1,B_1)$ we have $\{A_0,B\} \cap \{A_2,B_2\} \ne \emptyset$.
Clearly $A_0 \ne A_2 = B_0$ since $G$ has no loops and $A_0 \ne B_2=A_4$ by Observation~\ref{obs:A_i,B_i}.
If $B=A_2=B_0$, then $(A,B)$ and $(A_0,B_0)$ are parallel edges in $G$.
Therefore $B=B_2$. However, this implies that $(A,B)$ intersects $(A_4,B_4)$ twice: once at a vertex of $t_0$ and once at $B=B_2=A_4$, see Figure~\ref{fig:0-pentagon1} for an illustration.
\begin{figure}[t]
	\centering
	\subfloat[If $A=A_0$ is the common vertex of $(A,B)$ and $(A_0,A_2)$, then $B=B_2$ and $(A,B)$ and $(A_4,B_4)$ intersect twice.]{\includegraphics[width= 7cm]{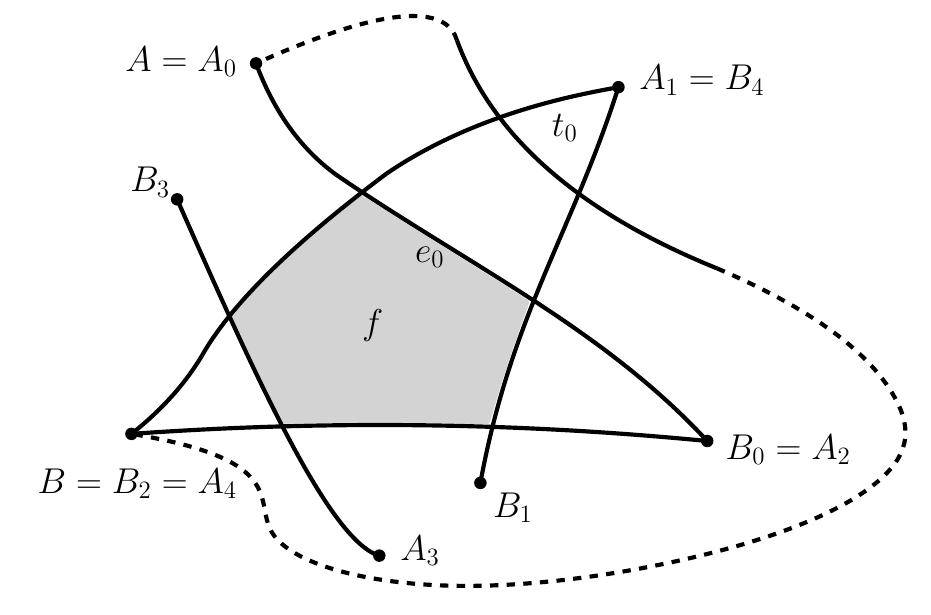}\label{fig:0-pentagon1}}
	\hspace{5mm}
	\subfloat[If $B=A_2$ is the common vertex of $(A,B)$ and $(A_0,A_2)$ and $A=A_3$ then $(A,B)$ and $(A_1,B_1)$ intersect twice.]{\includegraphics[width= 7cm]{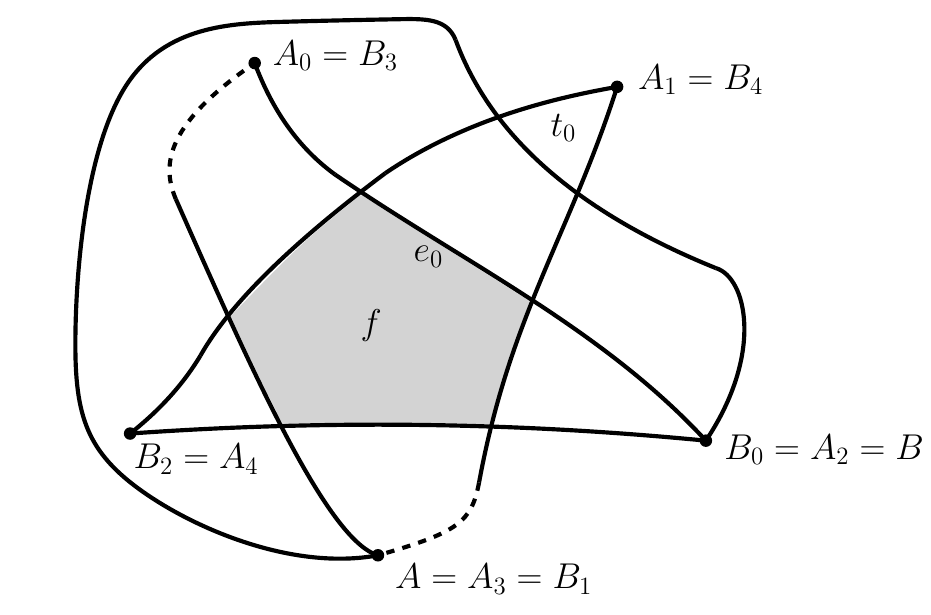}\label{fig:0-pentagon2}}
	\hspace{5mm}
	\subfloat[If $B=A_2$ is the common vertex of $(A,B)$ and $(A_0,A_2)$ and $A=B_3$ then $(A,B)$ and $(A_1,B_1)$ intersect twice.]{\includegraphics[width= 7cm]{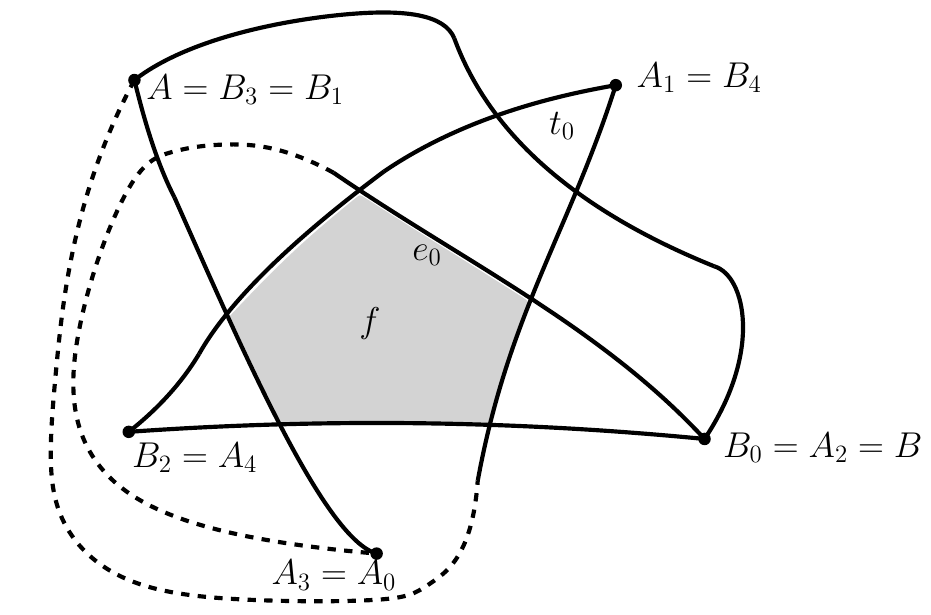}\label{fig:0-pentagon3}}
	\caption{Illustrations for the proof of Proposition~\ref{prop:0-pentagon-wedge-neighbors}: $f$ is a $0$-pentagon whose wedge-neighbors at $e_0$ and $e_1$ are $1$-triangles such that $t_0$ is not its edge-neighbor at $e_0$.} 
	\label{fig:0-pentagon}
\end{figure}

\smallskip
Suppose now that the common vertex of $(A,B)$ and $(A_0,A_2)$ is $A_2$.
Assume without loss of generality that $B=A_2=B_0$.
Since both $(A_3,B_3)$ and $(A,B)$ cross $(A_4,B_4)$ we have $\{A_3,B_3\} \cap \{A,B\} \ne \emptyset$.
By Observation~\ref{obs:A_i,B_i} $B=A_2 \ne A_3,B_3$, thus $A=A_3$ or $A=B_3$.

Suppose that $A=A_3$. 
By Observation~\ref{obs:common-vertex} $\{A_0,B_0\} \cap \{A_3,B_3\} \ne \emptyset$. 
It follows from Observation~\ref{obs:A_i,B_i} that $B_0=A_2 \ne A_3,B_3$, thus $A_0=A_3$ or $A_0=B_3$.
The first case $A_0=A_3=A$ was already considered, therefore we may assume that $A_0=B_3$.
By Observation~\ref{obs:common-vertex} $\{A_1,B_1\} \cap \{A_3,B_3\} \ne \emptyset$. 
It follows from Observation~\ref{obs:A_i,B_i} that $A_1 = B_4 \ne A_3,B_3$ and $B_1 \ne A_0=B_3$, thus $B_1=A_3$.
However, then $(A,B)$ intersects $(A_1,B_1)$ twice: at a vertex of $t_0$ and at $A=A_3=B_1$, see Figure~\ref{fig:0-pentagon2} for an illustration.

It remains to consider the case $A=B_3$.
By Observation~\ref{obs:common-vertex} $\{A_0,B_0\} \cap \{A_3,B_3\} \ne \emptyset$.
It follows from Observation~\ref{obs:A_i,B_i} that $B_0=A_2\ne A_3,B_3$, thus $A_0=A_3$ or $A_0=B_3$.
If $A_0=B_3=A$, then $(A,B)$ and $(A_0,B_0)$ are parallel edges which is impossible.
Therefore, $A_0=A_3$.
It follows from Observation~\ref{obs:common-vertex} that $\{A_1,B_1\} \cap \{A_3,B_3\} \ne \emptyset$.
By Observation~\ref{obs:A_i,B_i} $A_3=A_0 \ne A_1,B_1$ and $B_3 \ne A_1=B_4$, thus, $B_3=B_1$.
However, then $(A_1,B_1)$ and $(A,B)$ intersect twice: at a vertex of $t_0$ and at $A=B_3=B_1$,
see Figure~\ref{fig:0-pentagon3} for an illustration.
\end{proof}

\begin{prop}\label{prop:0-hexagon-wedge-neighbors}
Let $f$ be a $0$-hexagon which is a wedge-neighbor of a $1$-triangle at each of its edges.
Then $f$ is the edge-neighbor of $t_i$ at $e_i$ for $i=0,\ldots,5$.
\end{prop}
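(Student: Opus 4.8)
The plan is to mimic the argument of Proposition~\ref{prop:0-pentagon-wedge-neighbors}, but using the stronger hypothesis that \emph{every} edge of the $0$-hexagon $f$ carries a $1$-triangle wedge-neighbor. First I would record the apex information: for each $i$, the existence of $t_i$ as a wedge-neighbor of $f$ at $e_i$ forces $B_{i-1}=A_{i+1}$ to be the apex of the wedge of $t_i$, so along the whole cycle we get the chain of identifications $B_{i-1}=A_{i+1}$ for $i=0,\dots,5$. In particular the six edges of $G$ containing the edges of $f$ are really only (at most) three distinct edges up to sharing endpoints, and one should first check using Observation~\ref{obs:A_i,B_i} and Observation~\ref{obs:common-vertex} which coincidences among the $A_i,B_i$ are actually forced (as in Proposition~\ref{prop:A4=B2}, one expects relations like $A_{i-1}=B_{i+2}$, i.e.\ the ``opposite'' edges of $f$ share an endpoint).

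Next, suppose for contradiction that $f$ is \emph{not} the edge-neighbor of $t_0$ at $e_0$; then by definition the wedge of $t_0$ at $e_0$ contains at least one $0$-quadrilateral, and I would introduce the edge $(A,B)$ of $G$ containing the far edge of $t_0$ (the edge of $t_0$ not incident to the apex $A_1=B_5$). As in the pentagon proof, $(A,B)$ crosses both $(A_5,B_5)$ and $(A_1,B_1)$ at vertices of $t_0$, and since $(A,B)$ and $(A_0,B_0)$ both cross $(A_1,B_1)$ they must share an endpoint; I would then run the same case analysis ($A$ or $B$ equal to $A_0$ or $B_0$, then chasing the consequences against $(A_2,B_2)$ and $(A_3,B_3)$ via Observations~\ref{obs:A_i,B_i} and~\ref{obs:common-vertex}) to derive in every case either a pair of parallel edges, a loop, or two edges of $G$ crossing twice — each contradicting that $G$ is a simple topological graph. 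The symmetry of the hexagon (cyclic rotation $i\mapsto i+1$) then gives the statement for all $i=0,\dots,5$, not just $i=0$.

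The place where the hexagon argument genuinely differs from the pentagon one — and which I expect to be the main obstacle — is that the hexagon is ``less constrained'': in the pentagon, three consecutive edges $e_{i-1},e_i,e_{i+1}$ already wrap most of the way around, so Observation~\ref{obs:common-vertex} pins things down quickly, whereas in the hexagon the extra edge gives one more degree of freedom and the relevant edges $(A,B)$, $(A_0,B_0)$, $(A_2,B_2)$, $(A_3,B_3)$, $(A_4,B_4)$ have more ways to be arranged. To close this, I would exploit the fact that \emph{all six} $t_i$ exist simultaneously: the full set of apex identifications $B_{i-1}=A_{i+1}$ around the hexagon, together with the opposite-endpoint relations obtained in the first step, should leave essentially no room for $(A,B)$ to avoid meeting one of these edges a second time. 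Concretely, I expect that once $(A,B)$ shares an endpoint with $(A_0,B_0)$, tracking that endpoint through the chain of identifications forces it to coincide with a vertex lying on $(A_1,B_1)$ or $(A_5,B_5)$ on the wrong side of $t_0$, giving the double crossing. If a purely local case analysis proves unwieldy, a reasonable fallback is to first establish a cleaner structural description of a $0$-hexagon all of whose edges carry $1$-triangle wedge-neighbors — likely that its six bounding edges of $G$ reduce to three edges forming a ``triangle'' of crossings — and then argue directly from that picture.
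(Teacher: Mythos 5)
Your overall strategy is exactly the paper's: record the apex identifications forced by the six $1$-triangles, suppose $f$ is not the edge-neighbor of $t_0$ at $e_0$, introduce the edge $(A,B)$ of $G$ containing the side of $t_0$ not incident to the apex $A_1$, and force shared endpoints via the adjacency-crossing property until a loop, a parallel edge or a double crossing appears. The problem is that the proposal stops exactly where the proof has to happen (``I would then run the same case analysis\dots'', ``I expect that\dots''), and the structural picture you expect to guide that analysis is wrong in two places. First, the forced coincidences are precisely $B_i=A_{i+2}$ for all $i$ (the paper writes this as $A_i=B_{i+4}$) and nothing more: together with Observation~\ref{obs:A_i,B_i} and the absence of loops this makes all six points $A_0,\dots,A_5$ distinct, so the six bounding edges of $G$ form \emph{two} vertex-disjoint triangles, on $\{A_0,A_2,A_4\}$ and on $\{A_1,A_3,A_5\}$; they do not reduce to three edges forming one triangle, and opposite edges of $f$ do \emph{not} share an endpoint, so the pentagon-style relation $A_{i-1}=B_{i+2}$ you anticipate has no analogue here.

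Second, your assessment that the hexagon is ``less constrained'' and the case analysis the main obstacle is backwards: with all six apexes identified the configuration is rigid and the contradiction is a two-line argument, much shorter than in Proposition~\ref{prop:0-pentagon-wedge-neighbors}. Since $(A,B)$ and $(A_0,A_2)$ both cross $(A_1,A_3)$ they share an endpoint, WLOG $A=A_0$; then $B\neq A_2$ and $B\neq A_4$, since otherwise $(A,B)$ would be parallel to $(A_0,A_2)$ or to $(A_4,A_0)$; but $(A,B)$ and $(A_2,A_4)$ also both cross $(A_1,A_3)$, so $\{A_0,B\}\cap\{A_2,A_4\}\neq\emptyset$, forcing $A_0=A_2$ or $A_0=A_4$, i.e.\ a loop — contradiction. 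So the route you chose does close, but as written the decisive step is missing, and your fallback description of the configuration (three edges, a ``triangle'' of crossings) would have led you astray had you pursued it.
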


\begin{proof}
Since $f$ is a wedge-neighbor of a $1$-triangle at each of its edges, it follows that $A_i=B_{i+4}$ for every $i=0,\ldots,5$.
Suppose by contradiction and without loss of generality that $t_0$ is not an edge-neighbor of $f$ at $e_0$ and let $(A,B)$ be the edge of $G$ that contains the edge of $t_0$ which is not incident to $A_1$, see Figure~\ref{fig:0-hexagon}.
\begin{figure}[t]
	\centering
	\includegraphics[width= 7cm]{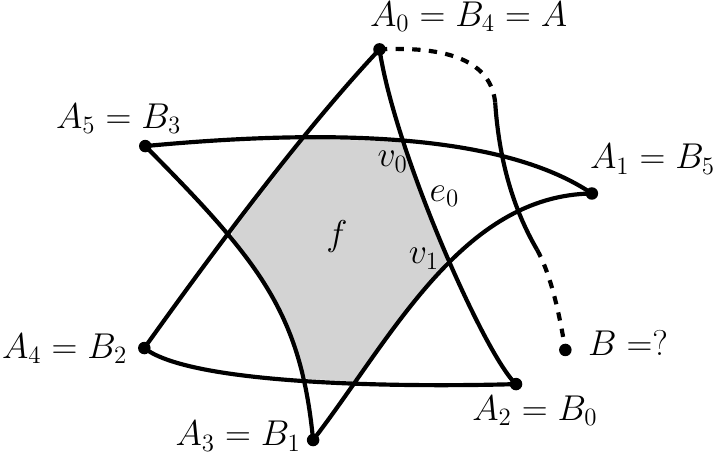}
	\caption{If $f$ is a $0$-hexagon whose wedge-neighbors are all $1$-triangles, then they are also its edge-neighbors.}
	\label{fig:0-hexagon}		
\end{figure}
Since $(A,B)$ and $(A_0,A_2)$ both cross $(A_1,A_3)$ we have $\{A,B\} \cap \{A_0,A_2\} \ne \emptyset$.
Assume without loss of generality that $A=A_0$.
Then $B \ne A_2, A_4$ since $G$ contains no parallel edges.
However, $\{A_0,B\} \cap \{A_2,A_4\} \ne \emptyset$ since the corresponding edges both cross $(A_1,A_3)$.
Thus $A_0=A_2$ or $A_0=A_4$ which implies that $G$ contains a loop which is impossible.
\end{proof}

We can now show that the charge of every $0$-pentagon and $0$-hexagon is non-negative after Step~4.

\begin{prop}\label{prop:0.2-in-Step-4}
	Let $f \in F'$ be a face such that $ch_3(f) < 0$. 
	If $f$ contributes charge in Step~2 through two consecutive edges $e_{i-1}=v_{i-1}v_{i}$ and $e_{i}=v_{i}v_{i+1}$ such that its edge-neighbors at these edges are $1$-triangles, then $f$ receives $0.2$ units of charge from its vertex-neighbor at $v_{i}$ in Step~4.
\end{prop}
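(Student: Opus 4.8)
The plan is to study the face $f_i$ that is the vertex-neighbor of $f$ at $v_i$ and to show that it sends $0.2$ units of charge to $f$ in Step~4. For this it suffices to prove that $ch_3(f_i)>0$ and $ch_3(f_i)\ge 0.2\,|S_{f_i}|$: given these, $f_i$ sends $\min\{0.2,\,ch_3(f_i)/|S_{f_i}|\}=0.2$ to each face of $S_{f_i}$ in Step~4, and $f\in S_{f_i}$ because $ch_3(f)<0$ and $f$ is a vertex-neighbor of $f_i$ at $v_i$ (they meet at $v_i$ but share no edge incident to $v_i$).

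First I would pin down the local picture at $v_i$. Let $g$ and $h$ be the edges of $G$ containing $e_{i-1}$ and $e_i$; since $e_{i-1}$ and $e_i$ are edges of $M(G)$ meeting at $v_i\in V'\setminus V(G)$, the curves $g$ and $h$ cross at $v_i$, the four edge-segments of $M(G)$ at $v_i$ alternate between $g$ and $h$ in the rotation, and the four faces at $v_i$ are $f$, the $1$-triangle $t$ that is the edge-neighbor of $f$ at $e_{i-1}$, the $1$-triangle $t'$ that is the edge-neighbor of $f$ at $e_i$, and the face $f_i$ opposite $f$; these are four distinct faces because $M(G)$ is $2$-connected. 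The endpoints of $e_{i-1}$ and $e_i$ are crossing points (Step~2 transfers only through such edges), so the unique $V(G)$-vertex of $t$ is a third vertex $P$, and that of $t'$ is $P'$. Tracing the boundary of $t$, its edge at $v_i$ other than $e_{i-1}$ is a crossing-free segment $v_iP$ lying along $h$ (stubs at $v_i$ alternate between $g$ and $h$) whose two incident faces are $t$ and $f_i$; symmetrically $v_iP'$ lies along $g$ and is shared by $t'$ and $f_i$. Hence $P,P'\in V(f_i)$, and they are distinct, for otherwise $P=P'$ would lie on both $g$ and $h$ and equal $v_i\notin V(G)$. So $|V(f_i)|\ge 2$, and --- the key point --- the two edges of $f_i$ incident to $v_i$ are shared with $1$-triangles, hence never contribute to $Q_{f_i}$ in Step~3.

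The remainder is bookkeeping. Since a crossing point has a unique vertex-neighbor across it, $|S_{f_i}|\le |f_i|-|V(f_i)|\le |f_i|-2$. As $v_i$ is a crossing point and $|V(f_i)|\ge 2$, we have $0<|V(f_i)|<|f_i|$, so two cases remain. If $|f_i|=3$, then $f_i$ is a $2$-triangle with vertices $P,v_i,P'$, so $ch_2(f_i)=0.2$ by Observation~\ref{obs:ch_2}; its edges at $v_i$ are shared with $1$-triangles and its third edge $PP'$ joins two $V(G)$-vertices, so $Q_{f_i}=\emptyset$, whence $ch_3(f_i)=0.2$ and $|S_{f_i}|=1$, and $f_i$ sends $0.2$ to $f$. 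If $|f_i|\ge 4$, then Observation~\ref{obs:ch_2} gives $ch_2(f_i)\ge 0.6|f_i|+|V(f_i)|-3.6\ge 0.6|f_i|-1.6$; since the two edges of $f_i$ at $v_i$ are excluded from $Q_{f_i}$ we get $|Q_{f_i}|\le |f_i|-2$, so $ch_3(f_i)\ge ch_2(f_i)-0.1\,|Q_{f_i}|\ge 0.5|f_i|-1.4$, which for $|f_i|\ge 4$ is positive and at least $0.2(|f_i|-2)\ge 0.2\,|S_{f_i}|$. In either case $f$ receives $0.2$ from $f_i$ in Step~4.

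I expect the main obstacle to be the local structural analysis: identifying the two edges of $f_i$ at $v_i$ as the segments $v_iP$ and $v_iP'$ of the $1$-triangles $t$ and $t'$, which at once gives $|V(f_i)|\ge 2$ and the fact that those edges do not drain charge from $f_i$ in Step~3. Everything after that is routine arithmetic from Observation~\ref{obs:ch_2}, and, conveniently, Propositions~\ref{prop:A4=B2}--\ref{prop:0-hexagon-wedge-neighbors} are not needed here because ``$t$ and $t'$ are $1$-triangles'' is assumed in the hypothesis rather than derived.
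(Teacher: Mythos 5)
Your proof is correct and follows essentially the same route as the paper: identify the vertex-neighbor $f_i$ of $f$ at $v_i$, observe that the two $1$-triangles are also edge-neighbors of $f_i$, so that $|V(f_i)|\ge 2$ and $f_i$ sends no charge through those two edges, then treat the $2$-triangle case separately and finish with the charge arithmetic against the number of vertex-neighbors. Your bookkeeping is in fact slightly more careful than the paper's, since you explicitly subtract the possible Step~3 outflow $0.1\,|Q_{f_i}|$ before comparing with $0.2\,|S_{f_i}|$, a term the paper's displayed bound on $ch_3(f_i)$ leaves implicit.
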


\begin{proof}
	Let $t_j$ be the edge-neighbor of $f$ at $e_j$, for $j=i-1,i$,
	and let $f_i$ be the vertex-neighbor of $f$ at $v_{i}$.
	The $1$-triangles $t_{i-1}$ and $t_i$ are also edge-neighbors of $f_{i}$.
	It follows that  $f_{i}$ is incident to at least two original vertices.
	If $f_i$ is a $2$-triangle, then $ch_3(f_i) = 0.2$ and $f$ receives this excess charge in Step~4.
	Otherwise, note that $ch_3(f_i) \ge |f_i|+|V(f_i)|-4-0.4|V(f_i)|-0.4(|f_i|-2)=0.6|f_i|+0.6|V(f_i)|-3.2$, since $f_i$ does not contribute charge through the edges it shares with $t_{i-1}$ and $t_i$.
	This excess charge is contributed to at most $|f_i|-|V(f_i)|$ vertex-neighbors of $f_i$ in Step~4, including $f$.
	Since $\frac{0.6|f_i|+0.6|V(f_i)|-3.2}{|f_i|-|V(f_i)|} \ge 0.2$ for $|f_i| \ge 4$ and $|V(f_i)| \ge 2$ the claim holds.
\end{proof}

\begin{cor}\label{cor:Step4}
	For every face $f \in F'$ we have $ch_4(f) \ge 0$, unless $f$ is a $1$-quadrilateral such that $ch_2(f) < 0$.
\end{cor}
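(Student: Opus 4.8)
The plan is to derive Corollary~\ref{cor:Step4} as the bookkeeping step that consolidates Observation~\ref{obs:ch_2} and Propositions~\ref{prop:0-pentagon-wedge-neighbors}, \ref{prop:0-hexagon-wedge-neighbors} and~\ref{prop:0.2-in-Step-4}, by a short case analysis on the value of $ch_2(f)$. First I would record the trivial direction: in Step~3 a face $f$ with $ch_2(f)>0$ gives away a total of $|Q_f|\cdot\min\{0.1,ch_2(f)/|Q_f|\}\le ch_2(f)$, and similarly in Step~4 it gives away at most $ch_3(f)$; moreover the recipients in Steps~3 and~4 are, respectively, only $1$-quadrilaterals with $ch_2<0$ and only faces with $ch_3<0$. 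Hence a face $f$ with $ch_2(f)\ge 0$ is never a recipient in Steps~3 or~4 and never drops below zero, so $ch_4(f)\ge 0$. By Observation~\ref{obs:ch_2} the only faces that can have $ch_2(f)<0$ are $1$-quadrilaterals, $0$-pentagons and $0$-hexagons; since $1$-quadrilaterals with $ch_2(f)<0$ are exactly the exception allowed in the statement, it remains to handle $0$-pentagons and $0$-hexagons with a negative charge.

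Next I would treat a $0$-pentagon $f$ with $ch_2(f)<0$. Because $V(f)=\emptyset$, $f$ neither sends nor receives charge in Step~3, so $ch_3(f)=ch_2(f)<0$, and therefore $f$ also sends no charge in Step~4; thus $ch_4(f)$ equals $ch_2(f)$ plus whatever $f$ collects through its vertices in Step~4. Since $ch_1(f)=1$ and $f$ loses exactly $0.4$ in Step~2 through each edge at which it is the wedge-neighbor of a $1$-triangle, writing $k$ for the number of such ``wedge-edges'' we get $ch_2(f)=1-0.4k$, so $ch_2(f)<0$ forces $k\in\{3,4,5\}$. Whenever $f$ contributes in Step~2 through two consecutive edges, Proposition~\ref{prop:0-pentagon-wedge-neighbors} guarantees that the two wedge-neighboring $1$-triangles are in fact the edge-neighbors of $f$ there, so Proposition~\ref{prop:0.2-in-Step-4} applies and $f$ receives $0.2$ units through their common vertex in Step~4. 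A one-line pigeonhole count on how $k$ wedge-edges can sit on a $5$-cycle shows that $k=3$ forces at least one consecutive pair, $k=4$ forces (at least) three such pairs meeting in three distinct vertices, and $k=5$ yields five such pairs; as contributions through distinct vertices accumulate, $f$ collects at least $0.2$, $0.6$, or $1$ in Step~4 respectively, which in each case compensates $ch_2(f)$, giving $ch_4(f)\ge 0$.

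Then I would treat a $0$-hexagon $f$ with $ch_2(f)<0$. Here $ch_1(f)=2$ and, exactly as above, $ch_3(f)=ch_2(f)=2-0.4k$ with $k$ the number of wedge-edges, so $ch_2(f)<0$ forces $k=6$ and $ch_2(f)=-0.4$, i.e.\ $f$ is a wedge-neighbor of a $1$-triangle at each of its six edges. Proposition~\ref{prop:0-hexagon-wedge-neighbors} then says these six $1$-triangles are also the edge-neighbors of $f$, so Proposition~\ref{prop:0.2-in-Step-4} applies to each of the six consecutive pairs of edges and delivers $0.2$ through each of the six distinct vertices of $f$ in Step~4; already two of these offset the deficit $0.4$, hence $ch_4(f)\ge 0$. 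Combining the three cases proves the corollary.

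The only points that need care rather than new ideas are: confirming from Observation~\ref{obs:ch_2} that no other face type can be negative after Step~2; checking that the Step-4 contributions promised by Proposition~\ref{prop:0.2-in-Step-4} pass through pairwise distinct vertices, so that they genuinely add up even when the same vertex-neighbor supplies several of them (which is already built into the statement of that proposition); and the elementary pigeonhole argument on the $5$-cycle. None of these is a real obstacle — the corollary is essentially accounting — and the genuinely nontrivial task left over, making the charge of the $1$-quadrilaterals non-negative, is deliberately deferred to the next discharging step, which is why those faces are excluded from this statement.
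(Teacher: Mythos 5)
Your proposal is correct and follows essentially the same route as the paper: reduce to $0$-pentagons and $0$-hexagons via Observation~\ref{obs:ch_2} and the excess-charge-only nature of Steps~3 and~4, then invoke Propositions~\ref{prop:0-pentagon-wedge-neighbors}, \ref{prop:0-hexagon-wedge-neighbors} and~\ref{prop:0.2-in-Step-4} with a case analysis on the number of wedge-edges ($k=3,4,5$ for the pentagon, $k=6$ for the hexagon). The only difference is that you spell out the pigeonhole count on the $5$-cycle that the paper compresses into ``by similar arguments,'' which is a harmless elaboration.
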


\begin{proof}
	By Observation~\ref{obs:ch_2} and the fact that every face may contribute only its excess charge in Steps~3 and~4 it is enough to consider $0$-pentagons and $0$-hexagons whose charge is negative after Step~3.
	
	Let $f$ be a $0$-pentagon such that $ch_3(f)<0$.
	Then $f$ contributed charge to exactly three, four or five $1$-triangles in Step~2 and ends up with a charge of $-0.2$, $-0.6$ or $-1$, respectively.
	In the first case, there are necessarily two consecutive edges on the boundary of $f$ through which $f$ contributes charge in Step~2.
	It follows from Propositions~\ref{prop:0-pentagon-wedge-neighbors} and~\ref{prop:0.2-in-Step-4} that there is a vertex-neighbor of $f$ that sends $0.2$ units of charge to $f$ in Step~4 and thus $ch_4(f)=0$.
	By similar arguments $f$ receives enough charge to compensate for its negative charge in the other two cases as well.
	If $f$ is a $0$-hexagon such that $ch_2(f) <0$, then it follows from Propositions~\ref{prop:0-hexagon-wedge-neighbors} and~\ref{prop:0.2-in-Step-4} that $f$ receives $0.2$ units of charge from six vertex-neighbors and thus $ch_4(f) \ge 0$.
\end{proof}

\begin{comment}
	
\begin{prop}\label{prop:consecutive-wedge-edges}
	Let $f \in F'$ be a face such that $|f| \ge 5$ and $V(f) = 0$.
	Suppose that $f$ contributes charge in Step~2 through two consecutive edges $xy$ and $yz$.
	Then the edge-neighbors of $f$ at $xy$ and $yz$ are $1$-triangles.
\end{prop}

\begin{proof}

\end{proof}
\end{comment}

It remains to take care of $1$-quadrilaterals whose charge after Step~4 is negative.
To this end we introduce the following definition.
Let $f_0\in F'$ be a face and let $x_0$, $x_1$ and $A$ be three consecutive vertices on the boundary of $f_0$ such that $x_1 \in V' \setminus V(G)$ and $A \in V(G)$ ($x_0$ may be either a crossing point or an original vertex of $G$).
Let $e$ be the edge of $G$ that contains $x_0x_1$.
Suppose that there are faces $f_1,f_2,\ldots,f_k, f'_1,f'_2,\ldots,f'_k$ such that (1)~$f_i$ is a $1$-triangle whose vertices are $x_{i},x_{i+1},A$ for $i=1,2,\ldots,k$; (2)~$f'_i$ is a $0$-quadrilateral which is the edge-neighbor of $f_i$ at $x_ix_{i+1}$ for $i=1,2,\ldots,k-1$; and (3)~$f'_k$ is not a $0$-quadrilateral.
Then $f'_k$ is the \emph{distant-neighbor} of $f_0$ at $x_1$, see Figure~\ref{fig:distant-neighbor} for an example.
\begin{figure}
	\centering
	\includegraphics[width= 5cm]{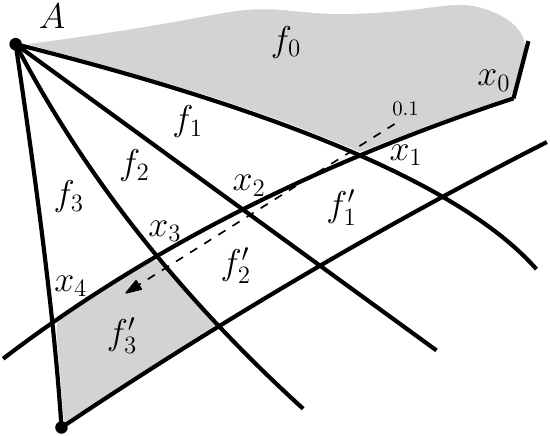}
	\caption{Step 5: $f_0$ sends charge to $f'_3$ -- its distant-neighbor at $x_1$.}
	\label{fig:distant-neighbor}
\end{figure}
Note that $f_0$ may have at most two distant-neighbors at each of its vertices (if $x_0 \in V(G)$ then it is possible that $f$ has another distant-neighbor at $x_1$, one which is incident to the edge of $G$ that contains $Ax_1$).

\medskip\noindent\textbf{Step 5: charging $1$-quadrilateral with a negative charge.} 
Let $f \in F'$ be a face such that $ch_4(f) > 0$ and let $S'_f$ be the (multi)set of faces that are distant-neighbors of $f$ and whose charge after Step~4 is negative.
Then $f$ sends $ch_4(f)/|S'_f|$ units of charge to each face in $S'_f$
(see Figure~\ref{fig:distant-neighbor} for an example).$\hfill\leftrightsquigarrow$
\medskip

We will need the following in order to prove that $1$-quadrilaterals end up with a non-negative charge after Step~5.

\begin{prop}\label{prop:neighboring-1-quadrilaterals}
Let $f_1$ and $f_2$ be two $1$-quadrilaterals that are edge-neighbors at $Ax$ (where $A \in V(G)$)
and contribute charge in Step~2 through edges $xy_1$ and $xy_2$ to $1$-triangles $t_1$ and $t_2$, respectively.
Let $(A,B)$ be the edge of $G$ that contains $Ax$ and let $(A,C_i)$ be the edge of $G$ that contains the other edge of $f_i$ which is incident to $A$, for $i=1,2$.
Then $(C_1,C_2)$ is the edge of $G$ that contains $xy_1$ and $xy_2$ (refer to Figure~\ref{fig:1-quad-1-quad}).
Furthermore, $f_i$ and $t_i$ are edge-neighbors at $xy_i$, for $i=1,2$.
\begin{figure}[t]
	\centering
	\includegraphics[width= 6cm]{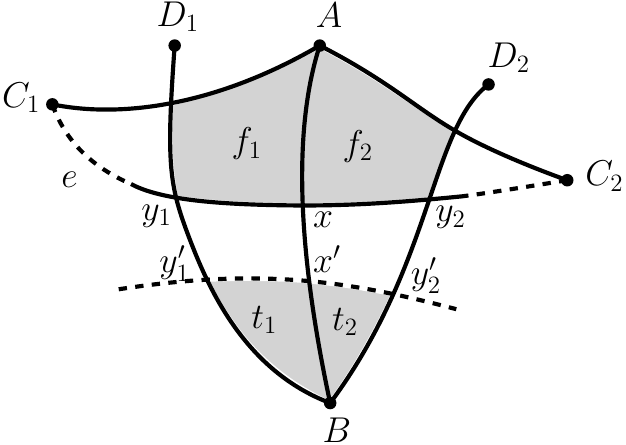}
	\caption{If $f_1$ and $f_2$ contribute charge in Step~2 through $xy_1$ and $xy_2$ to $t_1$ and $t_2$, then $(C_1,C_2)$ contains $xy_1$ and $xy_2$ and $t_1$ and $t_2$ are the immediate neighbors of $f_1$ and $f_2$ at $xy_1$ and $xy_2$, respectively.}
	\label{fig:1-quad-1-quad}		
\end{figure}
\end{prop}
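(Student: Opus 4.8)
The plan is to pin down, from the wedge structure and the geometry at the crossing point $x$, exactly which edges of $G$ carry the boundaries of $f_1$ and $f_2$, and then to apply the adjacency-crossing hypothesis to a single well-chosen edge. The main obstacle will be this structural bookkeeping; the applications of the hypothesis themselves are short, one-line deductions.

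First I would work out the local picture at $x$. Since $f_1$ and $f_2$ are edge-neighbors at $Ax$, they are the two faces of $M(G)$ incident to the segment $Ax$, so at the degree-$4$ vertex $x$ they occupy the two sectors of the rotation adjacent to $Ax$. Writing $(C,D)$ for the edge of $G$ that crosses $(A,B)$ at $x$, the rotation at $x$ alternates between the two edges of $M(G)$ at $x$ lying on $(A,B)$ and the two lying on $(C,D)$, so the edge of $M(G)$ that $f_i$ has at $x$ other than $Ax$ lies on $(C,D)$. Hence $xy_1$ and $xy_2$ are the two edges of $M(G)$ at $x$ lying on $(C,D)$, and in particular both lie on the common edge $(C,D)$ of $G$; it then remains to prove $\{C_1,C_2\}=\{C,D\}$ together with the edge-neighbor statement.

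Next I would determine the other two edges bounding $f_1$. As $f_1$ is a $1$-quadrilateral with unique original vertex $A$, its vertices in cyclic order are $A,x,y_1,z_1$, and at each of the crossing points $x,y_1,z_1$ its two incident edges lie on the two distinct edges of $G$ crossing there. Since $f_1$ contributes in Step~2 through $xy_1$, it is the terminal face $f_j$ of the wedge of a $1$-triangle $t_1$ whose apex lies on two edges $e_x,e_y$ of $G$, with $x$ on one of $e_x,e_y$ and $y_1$ on the other; say $x\in e_x$. Tracing the cyclic order of $f_1$, I would show that $e_x=(A,B)$ (it cannot be $(C,D)$, else $y_1\in e_x\cap e_y$ while $e_x$ and $e_y$ meet only at the apex), that the apex equals $B$ (it cannot be $A$, since then $e_y$ and the edge $(A,C_1)$ carrying $z_1A$ would both pass through $A$ yet cross at $z_1$), that $y_1z_1$ lies on $e_y=:(B,B_1')$, and that $(A,C_1)$ crosses $(B,B_1')$ at $z_1$; the same analysis for $f_2$ yields $(B,B_2')$ and $(A,C_2)$ with $(A,C_2)$ crossing $(B,B_2')$ at $z_2$. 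Now the adjacency-crossing hypothesis applied to $(B,B_1')$ — which is crossed by $(C,D)$ at $y_1$ and by $(A,C_1)$ at $z_1$ — forces $(C,D)$ and $(A,C_1)$ to share a vertex; since $A\notin\{C,D\}$ (because $(A,B)$ crosses $(C,D)$), that vertex is $C_1$, so $C_1\in\{C,D\}$, and likewise $C_2\in\{C,D\}$. Finally $C_1\ne C_2$, for otherwise $z_1A$ and $z_2A$ would be the same edge of $M(G)$ at $A$ on $(A,C_1)$, forcing $f_1$ and $f_2$ to share two edges at $A$ and hence to coincide, which is impossible since $M(G)$ is $2$-connected. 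Thus $\{C_1,C_2\}=\{C,D\}$, and $(C_1,C_2)$ is the edge of $G$ containing $xy_1$ and $xy_2$.

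For the edge-neighbor claim, suppose $t_1$ is not the edge-neighbor of $f_1$ at $xy_1$; by the definition of a wedge that edge-neighbor is then a $0$-quadrilateral, so there is a crossing point $p'$ on $(A,B)$ strictly between $x$ and $B$ (the one adjacent to $x$), this $0$-quadrilateral has $p'x$ as an edge, and its fourth edge lies on the edge $e_3$ of $G$ crossing $(A,B)$ at $p'$, which also crosses $(B,B_1')$. A second application of the adjacency-crossing hypothesis to $(B,B_1')$ — now crossed by $(C,D)$, $(A,C_1)$ and $e_3$, and using $A\notin e_3$ — gives $C_1\in e_3$. Repeating the argument for $f_2$, and noting that the crossing on $(A,B)$ adjacent to $x$ toward $B$ is this same $p'$: either the edge-neighbor of $f_2$ at $xy_2$ is a $0$-quadrilateral whose fourth edge again lies on $e_3$, whence $C_2\in e_3$ and so $e_3\supseteq\{C_1,C_2\}=\{C,D\}$, forcing $e_3=(C,D)$ — impossible, since $e_3$ crosses $(A,B)$ at $p'\ne x$ while $(C,D)$ crosses it at $x$ and two edges cross at most once; or it is the $1$-triangle $t_2$, which must then have $Bx$ as an edge, contradicting that $p'$ lies strictly between $x$ and $B$. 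Either way we reach a contradiction, so $t_1$ is the edge-neighbor of $f_1$ at $xy_1$, and symmetrically $t_2$ is the edge-neighbor of $f_2$ at $xy_2$. Throughout, the one recurring point of care is ruling out degenerate coincidences of the named points, which is precisely where simplicity of the drawing is used.
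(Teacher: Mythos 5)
Your argument is correct in substance, and for the first assertion it is essentially the paper's own proof: you identify the wedge apex as $B$, observe that the edge of $G$ containing $xy_i$ and the edge $(A,C_i)$ both cross the second wedge-bounding edge $(B,B_i')$, and apply the adjacency-crossing hypothesis, ruling out $A$ as the shared endpoint because $A$ is not an endpoint of the edge crossing $(A,B)$ at $x$; you merely spell out the bookkeeping (apex $=B$, $y_iz_i\subset(B,B_i')$, $z_i$ the crossing of $(A,C_i)$ with $(B,B_i')$) that the paper leaves implicit. For the second assertion you take a genuinely different, more direct route: the paper deletes the edge $(C_1,C_2)$, notes that the merged faces are again two neighboring $1$-quadrilaterals satisfying the hypotheses of the first part, and re-applies it to manufacture a parallel edge, whereas you work at the crossing $p'$ of $(A,B)$ adjacent to $x$ towards $B$, apply the hypothesis twice more to force the edge $e_3$ crossing there to have endpoints $C_1$ and $C_2$, and get a contradiction from $(A,B)$ being crossed twice; this in-lines the paper's deletion trick (same underlying contradiction, no need to verify the hypotheses survive edge removal) and, as a bonus, explicitly treats the case that the neighbor of $f_2$ at $xy_2$ is $t_2$ itself, which the paper glosses over. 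One small step needs repair: to get $C_1\neq C_2$ you assert that two faces sharing two edges at $A$ must coincide and invoke $2$-connectedness, but distinct faces can share two (or more) edges --- think of the two faces of a cycle --- so the deduction as stated fails. The correct quick reason, as in the paper, is the absence of parallel edges together with $\deg(A)\ge 6$: if $C_1=C_2$ then $(A,C_1)=(A,C_2)$ and hence $Az_1=Az_2$ in $M(G)$, so $f_1$ and $f_2$ would be the two faces on either side of both $Ax$ and $Az_1$, forcing the rotation at $A$ to consist of these two edges only, i.e.\ $\deg(A)=2$, a contradiction.
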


\begin{proof}
Note that $C_1 \ne C_2$ since there are no parallel edges in $G$ and $\deg(A) > 2$.
Since $f_i$ contributes charge through $xy_i$ in Step~2, it is part of a wedge whose apex is $B$.
Denote by $(B,D_i)$ the other edge of $G$ (besides $(A,B)$) that bounds this wedge.
Let $e$ be the edge of $G$ that contains $xy_1$ and $xy_2$. 
Then we wish to show that $e=(C_1,C_2)$.
Since both $e$ and $(A,C_i)$ cross $(B,D_i)$, they must share an endpoint.
It cannot be $A$ since $e$ also crosses $(A,B)$.
Therefore $C_i$ is an endpoint of $e$, for $i=1,2$, that is, $e=(C_1,C_2)$.

As for the second part of the claim, suppose without loss of generality that the edge-neighbor of $f_1$ at $xy_1$ is not $t_1$ but a $0$-quadrilateral and let $x'y'_1$ be its edge which is opposite to $xy_1$ such that $x'$ lies on $(A,B)$.
Then the edge-neighbor of $f_2$ at $xy_2$ must also be a $0$-quadrilateral.
Let $x'y'_2$ be its edge which is opposite to $xy_2$, see Figure~\ref{fig:1-quad-1-quad}.
If we remove the edge $(C_1,C_2)$ which contains $xy_1$ and $xy_2$,
then we get two neighboring $1$-quadrilaterals that would contribute charge in Step~2 to $t_1$ and $t_2$ through $x'y'_1$ and $x'y'_2$, respectively.
As before, this implies that the endpoints of the edge of $G$ that contains $x'y'_1$ and $x'y'_2$ are $C_1$ and $C_2$, however, $G$ does not contain parallel edges.
Therefore $f_i$ is an edge-neighbor of $t_i$ at $xy_i$, for $i=1,2$.
\end{proof}

\begin{lem}\label{lem:after-step-5}
After Step~5 the charge of every face is non-negative.
\end{lem}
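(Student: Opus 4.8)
The remaining obstacle after Corollary~\ref{cor:Step4} is to verify that every $1$-quadrilateral $f$ with $ch_2(f)<0$ ends up with non-negative charge after Step~5, and simultaneously that no face that donates charge in Steps~3--5 is pushed below zero. The overall plan is: (i) pin down exactly which $1$-quadrilaterals have $ch_2(f)<0$ and what they look like locally; (ii) show each such $f$ receives enough compensating charge (a total of at least $0.2$) across Steps~3 and~5; and (iii) check that all donating faces retain non-negative charge.

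First I would analyze a $1$-quadrilateral $f$ with $V(f)=\{A\}$ and $ch_2(f)<0$. By Observation~\ref{obs:ch_2}, $ch_2(f)\ge -0.2$, so $ch_2(f)=-0.2$ exactly, and this forces $f$ to have contributed $0.4$ in Step~2 through its one eligible edge $xy$ (the edge of $f$ not incident to $A$, with both endpoints crossing points); so $f$ is a link in a wedge whose apex is the other endpoint $B$ of the edge $(A,B)\supseteq Ax$. Write the vertices of $f$ in order as $A, x, y, z$ with $xy$ the edge through which charge was sent and $Ax\subseteq(A,B)$, $yz$ contained in an edge incident to $B$. Now consider the two edge-neighbors of $f$ at its edges $Ax$ and $Az$. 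By Proposition~\ref{prop:at-least-0.1}, the edge-neighbor $f'$ of $f$ at $Ax$ sends $f$ exactly $0.1$ in Step~3 \emph{unless} $f'$ is itself a $1$-triangle or a $1$-quadrilateral with $ch_2<0$; the same holds at $Az$. If both $Ax$-neighbor and $Az$-neighbor are "good" (neither exceptional), then $f$ receives $0.2$ in Step~3, $ch_4(f)\ge 0$, and we are done for this $f$. So the crux is the case where at least one side, say at $Ax$, the neighbor $f'$ is exceptional.

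The main case: $f'$ is a $1$-quadrilateral with $ch_2(f')<0$ (the $1$-triangle subcase is ruled out as in the commented-out argument, or handled directly: a $1$-triangle neighbor at $Ax$ would force an edge crossing $(A,B)$ or reusing a crossing point, contradicting simplicity). Then $f$ and $f'$ are two adjacent $1$-quadrilaterals in exactly the configuration of Proposition~\ref{prop:neighboring-1-quadrilaterals}, so their Step-2 recipients $t_1=t$ (from $f$) and $t_2$ (from $f'$) are $1$-triangles that are the \emph{immediate} edge-neighbors of $f$ and $f'$ at $xy_1=xy$ and $xy_2$ respectively. This is precisely the hypothesis that lets a distant-neighbor relation kick in: walking from $t$ through its $0$-quadrilateral chain along the edge $(A,B)$ reaches a terminal face $f'_k$ that is the distant-neighbor of \emph{some} face, and by the structure of the wedge this terminal face has $ch_4>0$ and donates to $f$ in Step~5. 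Concretely, I would argue that $f$ is a distant-neighbor (at $x$, via the chain of $1$-triangles $t=f_1,f_2,\dots$ with apex $B$ climbing along $(A,B)$) of a face $g$ with $|g|\ge 5$ or $|g|=4$ with a vertex in $V(G)$; such $g$ has $ch_4(g)>0$ by Corollary~\ref{cor:Step4} and Observation~\ref{obs:ch_2} (it is not one of the deficient $1$-quadrilaterals, since it terminates a $0$-quadrilateral chain), and in Step~5 it sends $f$ at least $0.1$. Combined with the $0.1$ that $f$ still receives in Step~3 from its $Az$-neighbor (which one shows is "good" in this configuration — it cannot also be a deficient $1$-quadrilateral without violating simplicity of $G$), we get $ch_5(f)\ge -0.2+0.1+0.1=0$. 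The subcase where \emph{both} the $Ax$- and $Az$-neighbors are deficient $1$-quadrilaterals is symmetric and yields two distant-neighbor donors, again totaling $\ge 0.2$.

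Finally I would check the bookkeeping on donors. Steps~3 and~4 were already phrased so that a face never gives away more than $ch_2(f)$, resp. $ch_3(f)$, and Corollary~\ref{cor:Step4} shows the recipients there are covered; the new worry is Step~5. A face $g$ acting as a distant-neighbor donor has $ch_4(g)$ equal to its post-Step-4 charge, and it has at most a bounded number of distant-neighbors (at most two per vertex, hence $O(|g|)$ total), so since $ch_4(g)\ge 0.6|g|+|V(g)|-3.6$ for the relevant faces — and $g$ is not a deficient $1$-quadrilateral — the amount $ch_4(g)/|S'_g|$ it sends each distant-neighbor is at least $0.1$, while $g$ itself stays at $ch_5(g)\ge 0$ because it only ever gives away its excess. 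I expect the main obstacle to be the case analysis in the previous paragraph: precisely identifying, via Propositions~\ref{prop:at-least-0.1} and~\ref{prop:neighboring-1-quadrilaterals} and the simplicity of $G$, that whenever one edge-neighbor of a deficient $1$-quadrilateral fails to pay in Step~3, a distant-neighbor with strictly positive charge is guaranteed to exist and pay in Step~5 — ruling out the "doubly exceptional, no distant donor" scenario is where all the geometric constraints (no parallel edges, no self-crossings, no Configuration~I) must be squeezed.
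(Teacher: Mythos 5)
There is a genuine gap at the very start of your analysis: you mischaracterize which $1$-quadrilaterals are deficient. A $1$-quadrilateral $f$ with $V(f)=\{A\}$ starts with charge $|f|+|V(f)|-4=1$, sends $0.4$ to $A$ in Step~1, and has \emph{two} edges whose endpoints are both crossing points (the two edges not incident to $A$); hence $ch_2(f)<0$ forces $f$ to have sent $0.4$ through \emph{both} of these edges, to two $1$-triangles $t_1,t_2$ belonging to wedges with two different apexes, since sending through only one edge leaves $+0.2$. Your local picture --- ``$f$ contributed through its one eligible edge $xy$'' and is ``a link in a wedge with apex $B$'' --- describes a $1$-quadrilateral with $ch_2=+0.2$, and the ensuing case analysis (what the neighbors at $Ax$ and $Az$ can be, and where compensation comes from) is built on this wrong picture. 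In the correct picture the two charged edges share the crossing point $y_1$ opposite $A$, and in several cases the decisive compensation is $0.2$ units received in Step~4 from the vertex-neighbor of $f$ at $y_1$ (Proposition~\ref{prop:0.2-in-Step-4}); this mechanism is absent from your argument.

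Two further problems. First, you dismiss the case that the edge-neighbor $f'$ at $Ax$ is a $1$-triangle as ``contradicting simplicity''; it is not impossible --- it occurs exactly when $(C,D)$ is triangle-crossed --- and it must be handled (then \emph{both} $f'$ and $f''$ are $1$-triangles, $t_1,t_2$ are edge-neighbors of $f$ at the charged edges, and $f$ is rescued at $y_1$ in Step~4; the configuration that really is impossible is the mixed one, one $1$-triangle and one deficient $1$-quadrilateral, which needs a parallel-edge argument you do not give). You also assert that the $Az$-neighbor ``cannot also be a deficient $1$-quadrilateral,'' yet both neighbors deficient is a genuine case (again resolved through $y_1$ in Step~4, not by two Step-5 donations). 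Second, the Step-5 part of your plan is asserted rather than proved: that the terminal face of the chain is not itself a deficient $1$-quadrilateral does not follow from ``it terminates a $0$-quadrilateral chain'' (a deficient $1$-quadrilateral is precisely a wedge terminus); one needs the argument that such a $1$-quadrilateral would force two edges incident to $B$ to intersect twice, which in turn requires first establishing that every edge crossing $(C,D)$ (or $(B,H)$) is incident to the opposite apex --- a fact proved differently in the deficient-$1$-quadrilateral and $1$-triangle subcases --- and then a count of how much the donor already spent in Steps~3--4 and how many distant-neighbors it can serve (at most two per vertex, and two only when the flanking boundary vertices are original vertices). Without these the claimed ``at least $0.1$ per distant-neighbor'' and the tally $-0.2+0.1+0.1\ge 0$ are not established.
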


\begin{proof}
By Observation~\ref{obs:ch_2}, Corollary~\ref{cor:Step4} and the fact that each face contributes only its excess charge in Steps~3, 4 and 5, it is enough to consider $1$-quadrilaterals that contribute charge to two $1$-triangles in Step~2.
Let $f$ be such a $1$-quadrilateral and let $A,x,y_1,z$ be the vertices incident to $f$ listed in a cyclic order such that $A \in V(G)$.
Then $f$ contributes charge in Step~2 to two $1$-triangles, $t_1$ and $t_2$, through $xy_1$ and $zy_1$, respectively.
Denote by $B$ (resp., $C$) the apex of wedge of $t_1$ (resp., $t_2$) and by $(C,D)$ (resp., $(B,H)$) the edge of $G$ that contains $xy_1$ (resp., $zy_1$).
Let $f'$ and $f''$ be the edge-neighbors of $f$ at $Ax$ and $Az$, respectively.
See Figure~\ref{fig:1-triangle-1-quad} for an illustration and note that $f' \ne f''$ since $\deg(A) > 3$ and $G$ is $2$-connected.

It follows from Proposition~\ref{prop:at-least-0.1} that it is enough to consider the case that at least one of $f'$ and  $f''$ is a $1$-triangle or a $1$-quadrilateral whose charge is negative after Step~2.
We consider the possible cases in the following propositions.

\begin{prop}\label{prop:two-1-triangles}
If $f'$ and $f''$ are $1$-triangles, then $ch_4(f) \ge 0$.
\end{prop}

\begin{proof}
Let $(A,I)$ be the edge of $G$ that contains the other edge of $f'$ besides $Ax$ which is incident to $A$,
see Figure~\ref{fig:two-1-triangles}.
\begin{figure}
	\centering
	\subfloat[If $f'$ is a $1$-triangle, then there exists a triangle-crossing of $(C,D)$]{\includegraphics[width= 6cm]{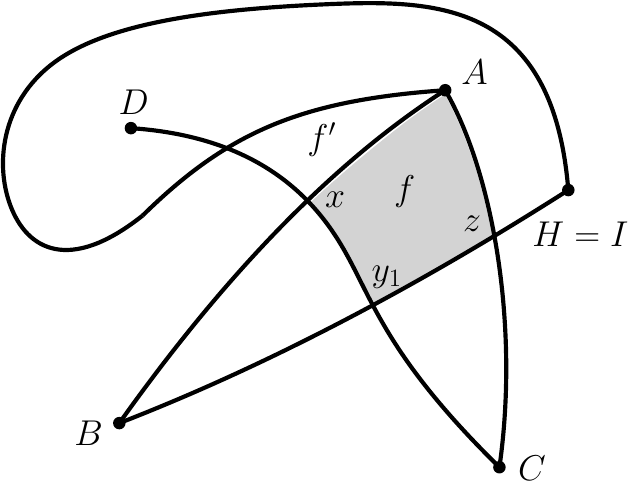}\label{fig:two-1-triangles}}
	\hspace{5mm}
	\subfloat[If $f'$ is a $1$-triangle and $f''$ is a $1$-quadrilateral such that $ch_2(f'')<0$, then $(A,I)$ and $(A,J)$ are parallel edges.]{\includegraphics[width= 7cm]{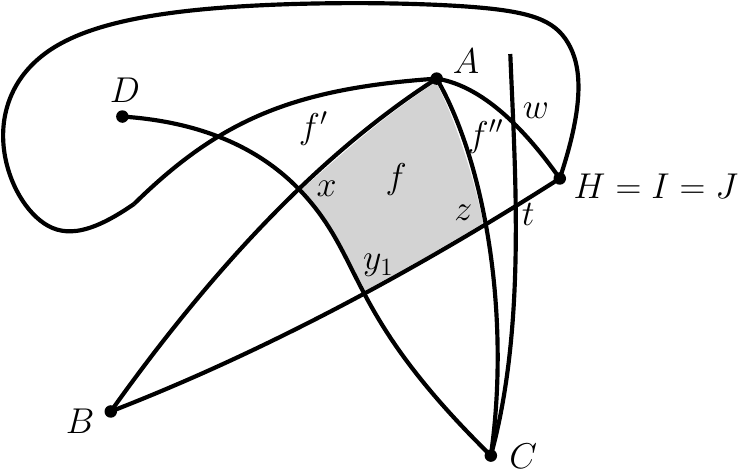}\label{fig:1-triangle-1-quad}}
	\caption{Illustrations for Propositions~\ref{prop:two-1-triangles} and~\ref{prop:1-triangle-1-quadrilateral}. $f$ is a $1$-quadrilateral such that $ch_3(f)<0$.}
	\label{fig:1-triangle-1-triangle/quad}
\end{figure}
Note that $(A,I)$, $(A,B)$ and $(B,H)$ all cross $(C,D)$, therefore $H=I$.
Since there is a triangle-crossing of $(C,D)$, it is not crossed by any other edge.
It follows that $z,y_1,C$ are the vertices of $t_2$.
Similarly, since $f''$ is a $1$-triangle it follows that $x,y_1,B$ are the vertices $t_1$.
Therefore, by Proposition~\ref{prop:0.2-in-Step-4} $f$ receives $0.2$ units of charge from its edge-neighbor at $y_1$ in Step~4 and thus $ch_4(f) \ge 0$.
\end{proof}

\begin{prop}\label{prop:1-triangle-1-quadrilateral}
	It is impossible that one of $f'$ and $f''$ is a $1$-triangle and the other is a $1$-quadrilateral whose charge after Step~2 is negative.
\end{prop}

\begin{proof}
Suppose by contradiction and without loss of generality that $f'$ is a $1$-triangle and $f''$ is a $1$-quadrilateral such that $ch_2(f'') < 0$.
Let $(A,I)$ be the edge of $G$ that contains the other edge of $f'$ besides $Ax$ which is incident to $A$.
As in the proof of Proposition~\ref{prop:two-1-triangles} we conclude that $I=H$.
Let $A,w,t,z$ be the vertices of $f''$ such that $Aw$ is an edge of $f''$, and let $(A,J)$ be the edge of $G$ that contains $Aw$, see Figure~\ref{fig:1-triangle-1-quad}.
Note that $(A,I)$ and $(A,J)$ are different edges, since $\deg(A) > 3$.
However, since $f''$ contributes charge through $wt$ in Step~2, it follows that the apex of the corresponding wedge is $J=H=I$. 
Hence $(A,J)$ and $(A,I)$ are parallel edges which is impossible.
\end{proof}

\begin{prop}\label{prop:two-1-quads}
If $f'$ and $f''$ are $1$-quadrilaterals whose charge is negative after Step~2, then $ch_4(f) \ge 0$.
\end{prop}

\begin{proof}
It follows from Proposition~\ref{prop:neighboring-1-quadrilaterals} (applied for $f$ and $f'$ and for $f$ and $f''$) that the edge-neighbors of $f$ at $xy_1$ and $zy_1$ are $t_1$ and $t_2$, respectively.
Therefore, by Proposition~\ref{prop:0.2-in-Step-4} $f$ receives $0.2$ units of charge from its vertex-neighbor at $y_1$ in Step~4 and ends up with a non-negative charge.
\end{proof}

It remains to consider the case that exactly one of $f'$ and $f''$ is either a $1$-triangle or a $1$-quadrilateral whose charge after Step~2 is negative.
Suppose without loss of generality that $f'$ is such a face and $f''$ is not.
Then by Proposition~\ref{prop:at-least-0.1} $f$ receives $0.1$ units of charge from $f''$ in Step~3, therefore, it is enough to show that $f$ receives $0.1$ units of charge in Step~4 or Step~5.

\begin{prop}\label{prop:f1-or-distant}
	If $ch_3(f)=-0.1$, the edge-neighbor of $f$ at $xy_1$ is $t_1$ and every edge which crosses $(C,D)$ is incident to $B$, then $ch_5(f) \ge 0$.
	Likewise, if $ch_3(f)=-0.1$, the edge-neighbor of $f$ at $zy_1$ is $t_2$ and every edge which crosses $(B,H)$ is incident to $C$, then $ch_5(f) \ge 0$.
\end{prop}

\begin{proof}
	By symmetry it enough to consider the case that $t_1$ is the edge-neighbor of $f$ at $xy_1$ and every edge which crosses $(C,D)$ is incident to $B$.
		
	Consider the vertex-neighbor of $f$ at $y_1$, denoted by $f_1$, and observe that $By_1$ is one of its edges since $t_1$ is the edge-neighbor of $f$ at $xy_1$.
	We claim that unless $f_1$ is a $1$-triangle, it contributes at least $0.1$ units of charge to $f$ in Step~4.
	
	Observe first that $f_1$ cannot be a $1$-quadrilateral.
	Indeed, if $f_1$ is a $1$-quadrilateral, then the edge of $G$ that contains the edge of $f_1$ which is opposite to $By_1$ crosses $(C,D)$ and therefore must be adjacent to $B$.
	Let $(B,P)$ denote this edge and let $(B,Q)$ be the edge of $G$ that contains the other edge of $f_1$ which is adjacent to $B$.
	But then $(B,P)$ and $(B,Q)$ intersect more than once since they cross at the opposite vertex to $y_1$ in $f_1$, see Figure~\ref{fig:after-step5f} for an illustration.
\begin{figure}
	\centering
	\subfloat[If $f_1$ is a $1$-quadrilateral, then $(B,P)$ and $(B,Q)$ intersect more than once.]{\includegraphics[width= 5cm]{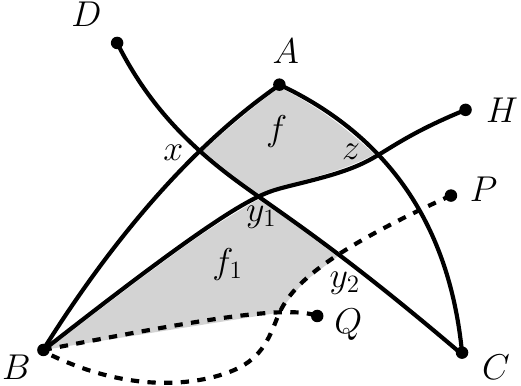}\label{fig:after-step5f}}
	\hspace{5mm}
	\subfloat[If $f_1$ is a $1$-triangle, then $f$ is a distant-neighbor of $f_k$ and receives from it charge in Step~5.]{\includegraphics[width= 7cm]{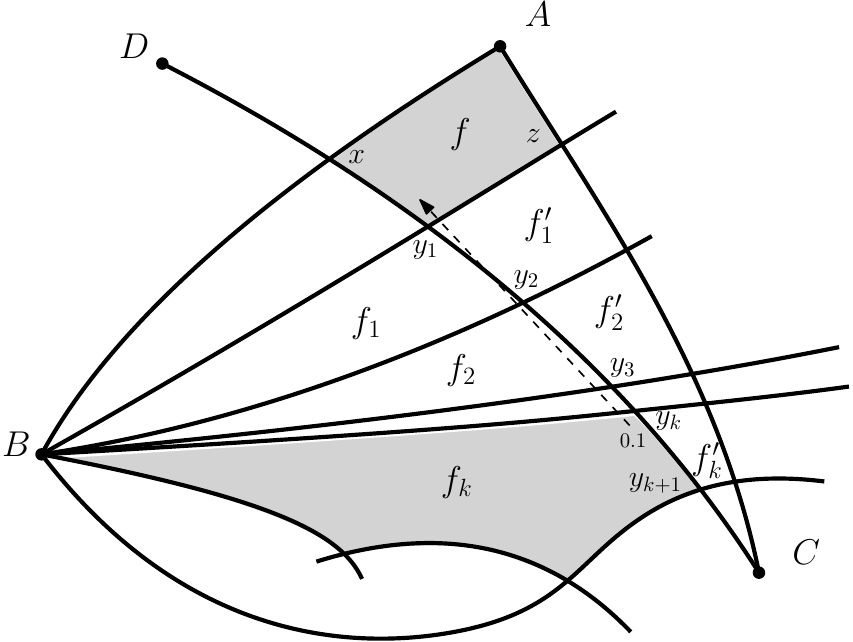}\label{fig:after-step5g}}
	\caption{Illustrations for the proof of Proposition~\ref{prop:f1-or-distant}.}
	\label{fig:after-step5}		
\end{figure}
	
	Note that $f_1$ does not contribute charge through its edges that are adjacent to $y_1$. 
	Indeed, its edge-neighbor at $By_1$ is the $1$-triangle $t_1$.
	Let $y_1y_2$ be its other edge which is adjacent to $y_1$ and note that this edge is a segment of $(C,D)$.
	If $y_2=C$, then the edge-neighbor of $f_1$ at $y_1y_2$ is also a $1$-triangle ($t_2$).
	Otherwise, let $(B,P)$ denote the edge that crosses $(C,D)$ at $y_2$ (recall that one of its endpoint must be $B$), see Figure~\ref{fig:after-step5f}.
	If $f_1$ contributes charges through $y_1y_2$, then $P=H$ which implies that $(B,H)$ and $(B,P)$ are parallel edges.
	
	\smallskip
	Recall that we wish to show that $f_1$ contributes charge to $f$ in Step~4, unless $f$ is a $1$-triangle.
	If $f_1$ is a $2$-triangle, then $ch_3(f_1) = 0.2$ and $f$ receives this excess charge in Step~4.
	Otherwise, if $f_1$ is not a triangle, then note that $ch_3(f_1) \ge |f_1|+|V(f_1)|-4-0.4|V(f_1)|-0.4(|f_1|-2)=0.6|f_1|+0.6|V(f_1)|-3.2$, since $f_1$ does not contribute charge through $By_1$ and $y_1y_2$.
	This excess charge is contributed to at most $|f_1|-|V(f_1)|-1$ vertex-neighbors of $f_1$ in Step~4, including $f$ (note that the vertex-neighbor of $f_1$ at $y_2$ is part of the wedge of $t_2$ and is therefore a $0$-quadrilateral or a $1$-triangle and thus does not get any charge from $f_1$).
	Since $\frac{0.6|f_1|+0.6|V(f_1)|-3.2}{|f_1|-|V(f_1)|-1} \ge 0.1$ in case $|f_1| \ge 5$ and $|V(f_1)| \ge 1$ and also in case $|f_1|=4$ and $|V(f_1)| \ge 2$, it follows that $f$ indeed receives $0.1$ units of charge from $f_1$ in Step~4.
	
	\smallskip
	Finally, suppose that $f_1$ is a $1$-triangle, let $B,y_1,y_2$ be its vertices, let $f'_1$ and $f_2$ be its edge-neighbors at $y_1y_2$ and $By_2$, respectively, and let $f'_2$ be the vertex-neighbor of $f_1$ at $y_2$, see Figure~\ref{fig:after-step5g}.
	Note that $f'_1$ must be a $0$-quadrilateral since it lies in the wedge of $t_2$.
	If $f_2$ is also a $1$-triangle whose vertices are $B,y_2,y_3$, then let $f_3$ be its edge-neighbor at $y_2y_3$ and let $f'_3$ be the vertex-neighbor of $f_2$ at $y_3$.
	Similarly, as long as $f_i$ is a $1$-triangle we define $y_{i+1}$, $f_{i+1}$ and $f'_{i+1}$ in this manner until eventually $f_k$ is not a $1$-triangle for some $k$.
	Note that $f$ is a distant-neighbor of $f_k$ at $y_k$, see Figure~\ref{fig:after-step5g}.
	
	\smallskip
	We claim that $f_k$ contributes at least $0.1$ units of charge to $f$ in Step~5.
	Indeed, one can conclude that $f_k$ cannot be a $1$-quadrilateral using the same arguments that showed that $f_1$ cannot be a $1$-quadrilateral (by removing the edges containing $By_2,\ldots,By_{k-1}$ the face $f_k$ `becomes' $f_1$).
	If $f_k$ is a $2$-triangle, then it does not contribute charge through $By_k$ and $Cy_k$, since its edge-neighbors at these edges are $1$-triangles.
	Furthermore, its vertex-neighbor at $y_k$ is a $0$-quadrilateral so it does not contribute any charge at Step~4 and thus $ch_4(f_k)=0.2$.
	Since $f_k$ has at most one other distant-neighbor besides $f$ it follows that $f_k$ send at least $0.1$ units of charge to $f$ in Step~5.
	
	Otherwise, if $f_k$ is not a $2$-triangle then let $y_ky_{k+1}$ be the other edge on the boundary of $f_k$ besides $By_k$ which is incident to $y_k$ and let $t$ be the number of vertices on the boundary of $f_k$ through which it contributed charge in Step~4.
	Clearly, $f_k$ does not contribute charge through $By_k$ and contributes at most $0.1$ units of charge through its other edge which is incident to $B$.
	It does not contribute charge through $y_ky_{k+1}$ neither, for otherwise the edges of $G$ that contain $By_k$ and $By_{k+1}$ would be parallel.
	Therefore, $ch_4(f_k) \ge |f_k|+|V(f_k)|-4-0.4|V(f_k)|-0.4(|f_k|-3)-0.1-0.2t=0.6|f_k|+0.6|V(f_k)|-0.2t-2.9$.
	Recall that $f_k$ may contribute charge to at most two distant-neighbors through each incident vertex $v$, such that $v \in V' \setminus V(G)$ and $v$ is not one of the $t$ vertices through which $f$ contributes charge in Step~4.
	Furthermore, if $f_k$ contributes charge to two distant-neighbors through $v$, then the two vertices on the boundary of $f_k$ that are adjacent to $v$ must be vertices of $G$.
	Therefore, if $y_k \in V(G)$, that is $y_k=C$, then the extra charge at $f_k$ is contributed to at most $2(|f_k|-|V(f_k)|-t)$ distant-neighbors including $f$.
	Since $\frac{0.6|f_k|+0.6|V(f_k)|-0.2t-2.9}{2(|f_k|-|V(f_k)|-t)} \ge 0.1$ for $|f_k| \ge 4$ and $|V(f_k)| \ge 2$, we have $ch_5(f) \ge 0$.
	Otherwise, if $y_k \in V' \setminus V(G)$ then $f_k$ has at most one distant-neighbor at each of $y_k$ and $y_{k+1}$.
	Therefore, its extra charge is contributed to at most $2(|f_k|-|V(f_k)|-t-2)+2$ distant-neighbors including $f$ in Step~5.
	Since $\frac{0.6|f_k|+0.6|V(f_k)|-0.2t-2.9}{2(|f_k|-|V(f_k)|-t-1)} \ge 0.1$ for $|f_k| \ge 4$ and $|V(f_k)|\ge 2$ and also for $|f_k| \ge 5$ and $|V(f_k)| \ge 1$, we have $ch_5(f) \ge 0$.
\end{proof}

Using this proposition we now consider the remaining cases where $f'$ is either a $1$-triangle or a $1$-quadrilateral with a negative charge after Step~2 and $f''$ is neither of those.

\begin{prop}\label{prop:f'-1quad}
If $f'$ is a $1$-quadrilateral such that $ch_2(f) < 0$ and $f''$ is neither such a $1$-quadrilateral nor a $1$-triangle, then $ch_5(f) \ge 0$.
\end{prop}

\begin{proof}
It follows from Proposition~\ref{prop:neighboring-1-quadrilaterals} that $f'$ has an edge, denote it by $xw$, which is contained in $(C,D)$ and that $\{B,x,y_1\}$ and $\{B,x,w\}$ are the vertex sets of the $1$-triangles to which $f$ and $f'$ contribute charge through $xy_1$ and $xw$, respectively. 
Let $(B,J)$ be the edge of $G$ that contains $Bw$, see Figure~\ref{fig:f''-quad}.
\begin{figure}
	\centering
	\subfloat[$f'$ is a $1$-quadrilateral such that $ch_2(f')<0$.]{\includegraphics[width= 5cm]{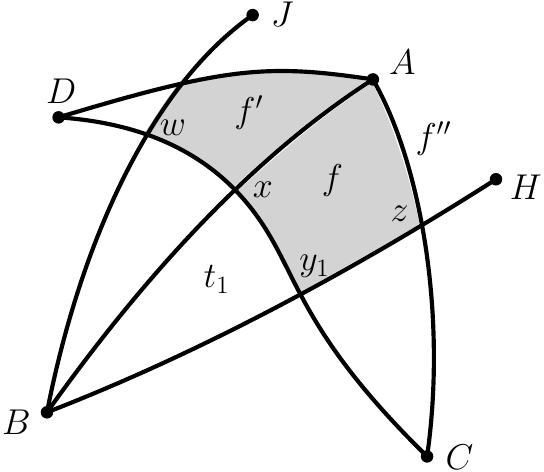}\label{fig:f''-quad}}
	\hspace{5mm}
	\subfloat[If $f'$ is a $1$-triangle and there is an edge that crosses $By_1$ and shares $D$ with $(C,D)$ then its other endpoint must by $A$. This implies that this edges intersects $(A,B)$ twice.]{\includegraphics[width= 7cm]{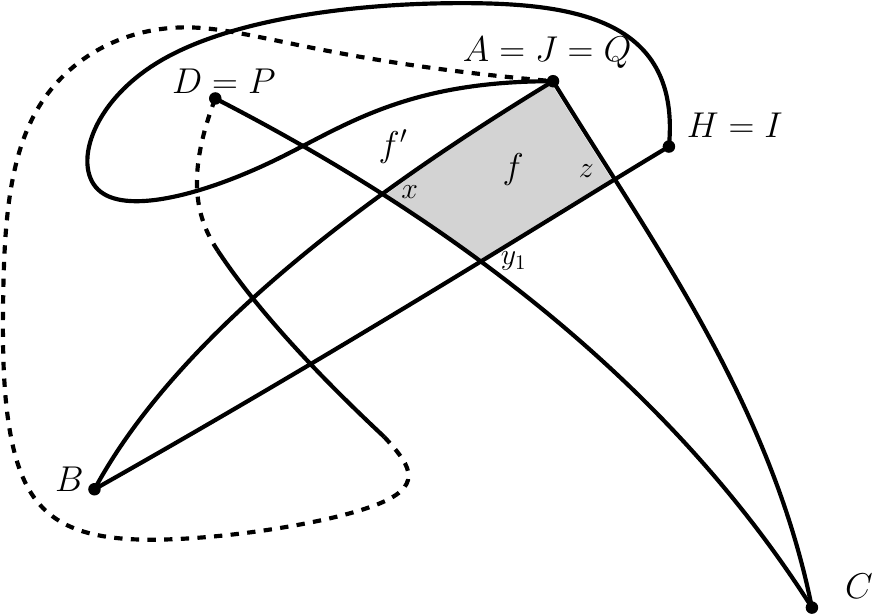}\label{fig:1-quad-1-tri3}}
	\caption{Illustrations for the proof of Propositions~\ref{prop:f'-1quad} and~\ref{prop:f'-1triangle}.}
	\label{fig:after-step5}		
\end{figure}
Since $(B,J)$, $(B,A)$ and $(B,H)$ all cross $(C,D)$, every other edge that crosses $(C,D)$ must also be incident to $B$.
Therefore, it follows from Proposition~\ref{prop:f1-or-distant} that $ch_5(f) \ge 0$.
\end{proof}

\begin{prop}\label{prop:f'-1triangle}
	If $f'$ is a $1$-triangle and $f''$ is neither a $1$-triangle nor a $1$-quadrilateral whose charge after Step~2 is negative, then $ch_5(f) \ge 0$.
\end{prop}

\begin{proof}
Let $(A,I)$ be the edge of $G$ that contains the other edge of $f'$ besides $Ax$ which is incident to $A$.
As in the proof of Proposition~\ref{prop:two-1-triangles}, we conclude that $H=I$ and that no other edges besides $(A,H)$, $(A,B)$ and $(B,H)$ cross $(C,D)$.
Therefore, $Cy_1$ is an edge of $M(G)$ and $t_2$ is an edge-neighbor of $f$ at $wy_1$.
If $By_1$ is also an edge of $M(G)$, then $f_1$, the vertex-neighbor of $f$ at $y_1$ contributes at least $0.2$ units of charge to $f$ in Step~4 by Proposition~\ref{prop:0.2-in-Step-4} and thus $ch_5(f) \ge 0$.

Assume therefore that $By_1$ is crossed by an edge $(P,Q)$ and note that this edge must also cross $Bx$.
Since both $(P,Q)$ and $(C,D)$ cross $(B,H)$ they must share an endpoint.
We claim that this endpoint must be $C$.
Indeed, suppose without loss of generality that $P=D$, then $Q=A$ since $(A,C)$ also crosses $(B,H)$.
But then $(P,Q)$ and $(A,B)$ intersect twice since $(P,Q)$ crosses $Bx$ which is part of $(A,B)$, see Figure~\ref{fig:1-quad-1-tri3} for an illustration.
Therefore, every edge which crosses $(B,H)$ must be incident to $C$.
Thus, it follows from Proposition~\ref{prop:f1-or-distant} that $ch_5(f) \ge 0$.
\end{proof}

Since we have considered all the possible cases with respect to the faces $f'$ and $f''$, we conclude that $ch_5(f) \ge 0$ and the lemma follows.
\end{proof}

It follows from Lemma~\ref{lem:after-step-5} and the discharging steps that the final charge of every face is non-negative and the final charge of every vertex $A \in V(G)$ is $0.4\deg(A)$.
Since the total charge remains $4n-8$ we have
$0.8|E(G)|=\sum_{A \in V(G)} 0.4\deg(v) \leq 4n-8$ and thus $|E(G)| \leq 5n-10$.

This concludes the first part of Theorem~\ref{thm:no-conf-I}.
If $G$ is drawn using straight-line edges, then we may assume without loss of generality that the boundary of the outer face $f_{\rm out}$ is a convex polygon whose vertices are a subset of at least three vertices of $G$.
It follows that $f_{\rm out}$ contributes charge only in Step~1, since in all the other steps the contributing faces have vertices that are crossing points.
Thus $ch_5(f_{\rm out}) = |f_{\rm out}|+|V(f_{\rm out})|-4-0.4|V(f_{\rm out})|=1.6|f_{\rm out}|-4 \ge 0.8$.
Therefore,  we have $0.8+0.8|E(G)|=0.8+\sum_{A \in V(G)} 0.4\deg(v) \leq 4n-8$ and thus $|E(G)| \leq 5n-11$.

\section{Discussion}
We have shown that an $n$-vertex simple adjacency-crossing graph has at most $5n-10$ edges.
The proof of this result could have been simplified if we would have used the results of Brandenburg~\cite{Br20} by which it is enough to consider fan-crossing graphs or even weakly fan-planar graphs. However, we chose to present a somewhat more complicated yet self-contained proof.

For graphs drawn by straight-line edges we have obtained the better bound $5n-11$.
Both of these bounds are tight, as there are matching lower bound constructions in~\cite{KU22}.
Note that it follows from~\cite{KKRS23} that the upper bound on the density of simple (weakly) fan-planar topological graph holds also for non-simple (weakly) fan-planar topological graphs.
It is likely that the same holds also for non-simple fan- or adjacency-crossing graphs.
Furthermore, Kaufmann and Ueckerdt~\cite{KU22} conjectured that the density of simple fan-planar topological graphs with non-homeomorphic parallel edges and non-trivial loops remains $5n-10$.
We believe that our proof methods would imply this result, however, it would require defining additional discharging steps and analyzing several more cases, and therefore for the sake of simplicity we chose not to aim for that more general result in this paper.

\footnotesize
\bibliographystyle{plainurl}
\bibliography{fan-planar}

\end{document}